\documentclass[12pt]{amsart}
\usepackage{amssymb}

\setlength{\topmargin}{0truein}
\setlength{\headheight}{.25truein}
\setlength{\headsep}{.25truein}
\setlength{\textheight}{9truein}
\setlength{\footskip}{.25truein}
\setlength{\oddsidemargin}{0truein}
\setlength{\evensidemargin}{0truein}
\setlength{\textwidth}{6.25truein}
\setlength{\voffset}{-0.5truein}
\setlength{\hoffset}{0truein}

\theoremstyle{plain}
\newtheorem{thm}[equation]{Theorem}
\newtheorem{lem}[equation]{Lemma}
\newtheorem{prop}[equation]{Proposition}
\newtheorem{cor}[equation]{Corollary}

\theoremstyle{definition}
\newtheorem{defn}[equation]{Definition}
\newtheorem*{ack}{Acknowledgments}

\theoremstyle{remark}
\newtheorem{rem}[equation]{Remark}
\newtheorem{exam}[equation]{Example}

\numberwithin{equation}{section}

\newcommand{\ZZ}{\mathbb{Z}}
\newcommand{\QQ}{\mathbb{Q}}

\newcommand{\FF}{\mathbb{F}}

\newcommand{\bbS}{\mathbb{S}}

\newcommand{\bq}{\mathbf{q}}

\newcommand{\cD}{\mathcal{D}}
\newcommand{\cG}{\mathcal{G}}
\newcommand{\cH}{\mathcal{H}}
\newcommand{\cM}{\mathcal{M}}

\newcommand{\cU}{\mathcal{U}}

\newcommand{\fp}{\mathfrak{p}}

\DeclareMathOperator{\ch}{char}
\DeclareMathOperator{\GL}{GL}
\DeclareMathOperator{\lcm}{lcm}
\DeclareMathOperator{\ord}{ord}
\DeclareMathOperator{\SL}{SL}

\newcommand{\on}{\overline{n}}

\newcommand{\ui}{\underline{i}}

\newcommand{\pd}{\partial}
\newcommand{\C}{C_\infty}

\newcommand{\tpi}{\widetilde{\pi}}
\newcommand{\fhat}{\hat{f}}

\newcommand{\power}[2]{{#1 [[ #2 ]]}}
\newcommand{\laurent}[2]{{#1 (( #2 ))}}
\newcommand{\prodprime}{\sideset{}{'}{\prod}}
\newcommand{\sumprime}{\sideset{}{'}{\sum}}
\newcommand{\vnorm}[1]{\lVert #1 \rVert_v}

\begin{document}

\title[Theta operators, Goss polynomials, and $v$-adic modular forms]{Theta operators, Goss polynomials, \\ and $v$-adic modular forms}

\author{Matthew A. Papanikolas}
\address{Department of Mathematics, Texas A{\&}M University, College Station,
TX 77843, USA}
\email{papanikolas@tamu.edu}

\author{Guchao Zeng}
\address{Department of Mathematics, Texas A{\&}M University, College Station,
TX 77843, USA}
\email{zengguchao@math.tamu.edu}

\thanks{This project was partially supported by NSF Grant DMS-1501362}

\subjclass[2010]{Primary 11F52; Secondary 11F33, 11G09}

\date{December 9, 2016}

\begin{abstract}
We investigate hyperderivatives of Drinfeld modular forms and determine formulas for these derivatives in terms of Goss polynomials for the kernel of the Carlitz exponential.  As a consequence we prove that $v$-adic modular forms in the sense of Serre, as defined by Goss and Vincent, are preserved under hyperdifferentiation.  Moreover, upon multiplication by a Carlitz factorial, hyperdifferentiation preserves $v$-integrality.
\end{abstract}

\keywords{Drinfeld modular forms, Goss polynomials, $v$-adic modular forms, hyperderivatives, false Eisenstein series}

\dedicatory{In honor of David Goss}

\maketitle

\section{Introduction} \label{S:intro}

In \cite{Serre73}, Serre introduced $p$-adic modular forms for a fixed prime $p$, as $p$-adic limits of Fourier expansions of holomorphic modular forms on $\SL_2(\ZZ)$ with rational coefficients.  He established fundamental results about families of $p$-adic modular forms by developing the theories of differential operators and Hecke operators acting on $p$-adic spaces of modular forms, and in particular he showed that the weight $2$ Eisenstein series $E_2$ is also $p$-adic.  If we let $\vartheta := \frac{1}{2\pi i} \frac{d}{dz}$ be Ramanujan's theta operator acting on holomorphic complex forms, then letting $\bq(z) = e^{2\pi i z}$, we have
\begin{equation} \label{E:classicaltheta}
  \vartheta = \bq\frac{d}{d\bq}, \quad \vartheta(\bq^n) = n\bq^n.
\end{equation}
Although $\vartheta$ does not preserve spaces of complex modular forms, Serre proved the induced operation $\vartheta : \QQ \otimes \power{\ZZ_p}{\bq} \to \QQ \otimes \power{\ZZ_p}{\bq}$ does take $p$-adic modular forms to $p$-adic modular forms and preserves $p$-integrality.

In the present paper we investigate differential operators on spaces of $v$-adic modular forms, where $v$ is a finite place corresponding to a prime ideal of the polynomial ring $A = \FF_q[\theta]$, for $\FF_q$ a field with $q$ elements and $q$ itself a power of a prime number $p$.  Drinfeld modular forms were first studied by Goss~\cite{Goss80a}, \cite{Goss80b}, \cite{Goss80c}, as rigid analytic functions,
\[
  f : \Omega \to \C,
\]
on the Drinfeld upper half space $\Omega$ that transform with respect to the group $\Gamma = \GL_2(A)$ (see \S\ref{S:Theta} for precise definitions).  Here if we take $K = \FF_q(\theta)$, then $\Omega$ is defined to be $\C \setminus K_\infty$, where $K_\infty = \laurent{\FF_q}{1/\theta}$ is the completion of $K$ at its infinite place and $\C$ is the completion of an algebraic closure of $K_\infty$.  Goss showed that Drinfeld modular forms have expansions in terms of the uniformizing parameter $u(z) := 1/e_C(\tpi z)$ at the infinite cusp of $\Omega$, where $e_C(z)$ is the exponential function of the Carlitz module and $\tpi$ is the Carlitz period.  Each such form $f$ is uniquely determined by its $u$-expansion,
\[
  f = \sum_{n=0}^{\infty} c_n u^n  \in \power{\C}{u}.
\]
If $k \equiv 0 \pmod{q-1}$, then the weight $k$ Eisenstein series of Goss~\cite{Goss80b}, has a $u$-expansion due to Gekeler~\cite[(6.3)]{Gekeler88} of the form
\[
  E_k = -\frac{\zeta_C(k)}{\tpi^k} - \sum_{a \in A,\,a\,\textup{monic}}
  G_k( u(az)), \quad
  \frac{\zeta_C(k)}{\tpi^k} \in K,
\]
where $\zeta_C(k)$ is a Carlitz zeta value, $G_k(u)$ is a Goss polynomial of degree $k$ for the lattice $\Lambda_C = A\tpi$ (see \S\ref{S:Gosspolys}--\ref{S:Theta} and \eqref{E:gk}), and $u(az)$ can be shown to be represented as a power series in $u$ (see \S\ref{S:Theta}).  Gekeler and Goss also show that spaces of forms for $\Gamma$ are generated by forms with $u$-expansions with coefficients in~$A$.  Using this as a starting point, Goss~\cite{Goss14} and Vincent~\cite{Vincent14} defined $v$-adic modular forms in the sense of Serre by taking $v$-adic limits of $u$-expansions and thus defining $v$-adic forms as power series in $K \otimes_A \power{A_v}{u}$ (see \S\ref{S:vadicforms}).  Goss~\cite{Goss14} constructed a family of $v$-adic forms based on forms with $A$-expansions due to Petrov~\cite{Petrov13} (see Theorem~\ref{T:PetrovGoss}), and Vincent~\cite{Vincent14} showed that forms for the group $\Gamma_0(v) \subseteq \GL_2(A)$ with $v$-integral $u$-expansions are also $v$-adic modular forms.

It is natural to ask how Drinfeld modular forms and $v$-adic forms behave under differentiation, and since we are in positive characteristic it is favorable to use hyperdifferential operators $\pd_z^r$, rather than straight iteration $\frac{d^r}{dz^r} = \frac{d}{dz} \circ \cdots \circ \frac{d}{dz}$ (see \S\ref{S:functions} for definitions).  Gekeler~\cite[\S 8]{Gekeler88} showed that if we define $\Theta := -\frac{1}{\tpi} \frac{d}{dz} = -\frac{1}{\tpi}\, \pd_z^1$, then we have the action on $u$-expansions determined by the equality
\begin{equation} \label{E:Theta1}
  \Theta = u^2 \frac{d}{du} = u^2\, \pd_u^1.
\end{equation}
Now as in the classical case, derivatives of Drinfeld modular forms are not necessarily modular, but Bosser and Pellarin~\cite{BosserPellarin08}, \cite{BosserPellarin09}, showed that hyperdifferential operators $\pd_z^r$ preserve spaces of quasi-modular forms, i.e., spaces generated by modular forms and the false Eisenstein series $E$ of Gekeler (see Example~\ref{Ex:falseE}), which itself plays the role of $E_2$.

For $r \geqslant 0$, following Bosser and Pellarin we define the operator $\Theta^r$ by
\[
  \Theta^r := \frac{1}{(-\tpi)^r}\, \pd_z^r.
\]
Uchino and Satoh~\cite[Lem.~3.6]{UchinoSatoh98} proved that $\Theta^r$ takes functions with $u$-expansions to functions with $u$-expansions, and Bosser and Pellarin~\cite[Lem.~3.5]{BosserPellarin08} determined formulas for the expansion of $\Theta^r(u^n)$.  If we consider the $r$-th iterate of the classical $\vartheta$-operator, $\vartheta^{\circ r} = \vartheta \circ \cdots \circ \vartheta$, then clearly by \eqref{E:classicaltheta},
\[
  \vartheta^{\circ r} (q^n) = n^r q^n.
\]
If we iterate $\Theta$, taking $\Theta^{\circ r} = \Theta \circ \cdots \circ \Theta$, then by \eqref{E:Theta1} we find
\[
  \Theta^{\circ r}(u^n) = r! \binom{n+r-1}{r} u^{n+r},
\]
which vanishes identically when $r \geqslant p$.  On the other hand, the factor of $r!$ is not the only discrepancy in comparing $\Theta^r$ and $\Theta^{\circ r}$, and in fact we prove two formulas in Corollary~\ref{C:Thetau} revealing that $\Theta^r$ is intertwined with Goss polynomials for $\Lambda_C$:
\begin{align}
\label{E:ThetaA}
  \Theta^r(u^n) &= u^n\, \pd_u^{n-1} \bigl( u^{n-2} G_{r+1}(u) \bigr), \quad \forall\,n \geqslant 1, \\
\label{E:ThetaB}
  \Theta^r(u^n) &= \sum_{j=0}^r \binom{n+j-1}{j} \beta_{r,j} u^{n+j}, \quad \forall\, n \geqslant 0,
\end{align}
where $\beta_{r,j}$ are the coefficients of $G_{r+1}(u)$.  These formulas arise from general results (Theorem~\ref{T:Gosspolydiffs}) on hyperderivatives of Goss polynomials for arbitrary $\FF_q$-lattices in $\C$, which is the primary workhorse of this paper, and they induce formulas for hyperderivatives of $u$-expansions of Drinfeld modular forms (Corollary~\ref{C:Thetaf}).  It is important to note that~\eqref{E:ThetaB} is close to a formula of Bosser and Pellarin~\cite[Eq.~(28)]{BosserPellarin08}, although the connections with coefficients of Goss polynomials appears to be new and the approaches are somewhat different.

Goss~\cite{Goss14} defines the weight space of $v$-adic modular forms to be $\bbS = \ZZ/(q^d-1)\ZZ \times \ZZ_p$, where $d$ is the degree of $v$, and if we take $\cM_s^m \subseteq K \otimes_A \power{A_v}{u}$ to be the space of $v$-adic forms of weight $s \in \bbS$ and type $m \in \ZZ/(q-1)\ZZ$ (see \S\ref{S:vadicforms}), then we prove (Theorem~\ref{T:Thetavadic}) that $\Theta^r$ preserves spaces of $v$-adic modular forms,
\[
  \Theta^r : \cM_s^m \to \cM_{s+2r}^{m+r}, \quad r \geqslant 0.
\]
Of particular importance here is proving that the false Eisenstein series $E$ is a $v$-adic form (Theorem~\ref{T:Evadic}).    Vincent~\cite[Thm.~1.2]{Vincent10} showed that $\Theta(f)$ is congruent to a modular form modulo $v$, so we show that this congruence lifts to $v$-adic modular forms.  For $r\geqslant q$, unlike in the classical case, $\Theta^r$ does not preserve $v$-integrality due to denominators coming from $G_{r+1}(u)$, but we show in \S\ref{S:vintegral} that this failure can be controlled, namely showing (Theorem~\ref{T:vintegral}) that
\[
  \Pi_r \Theta^r : \cM_s^m(A_v) \to \cM_{s+2r}^{m+r}(A_v), \quad r \geqslant 0,
\]
where $\Pi_r \in A$ is the Carlitz factorial (see \S\ref{S:functions}) and $\cM_s^m(A_v) = \cM_s^m \cap \power{A_v}{u}$.

\begin{ack}
The authors thank David Goss for a number of helpful suggestions on an earlier version of this article, and they thank Bruno Angl\`es for translating the abstract into French.  The authors also thank the referee for useful suggestions.
\end{ack}

\section{Functions and hyperderivatives} \label{S:functions}

Let $\FF_q$ be the finite field with $q$ elements, $q$ a fixed power of a prime $p$.  Let $A := \FF_q[\theta]$ be a polynomial ring in one variable, and let $K := \FF_q(\theta)$ be its fraction field.  We let $A_+$ denote the monic elements of $A$, $A_{d+}$ the monic elements of degree $d$, and $A_{(< d)}$ the elements of $A$ of degree $< d$.

For each place $v$ of $K$, we define an absolute value $|\cdot|_v$ and valuation $\ord_v$, normalized in the following way.  If $v$ is a finite place, we fix $\wp \in A_+$ to be the monic generator of the prime ideal $\fp_v$ corresponding to $v$ and we set $|\wp|_v = 1/q^{\deg \wp}$ and $\ord_v(\wp) = 1$.  If $v = \infty$, then we set $|\theta|_\infty = q$ and $\ord_\infty(\theta) = -\deg(\theta) = -1$.  For any place $v$ we let $A_v$ and $K_v$ denote the $v$-adic completions of $A$ and $K$.  For the place $\infty$, we note that $K_\infty = \laurent{\FF_q}{1/\theta}$, and we let $\C$ be a completion of an algebraic closure of $K_\infty$.  Finally, we let $\Omega := \C \setminus K_\infty$ be the Drinfeld upper half-plane of $\C$.

For $i \geqslant 1$, we set
\begin{equation}
  [i] = \theta^{q^i} - \theta, \quad D_i = [i] [i-1]^{q} \cdots [1]^{q^{i-1}}, \quad
  L_i = (-1)^i[i][i-1] \cdots [1],
\end{equation}
and we let $D_0=L_0=1$.  We have the recursions, $D_i = [i]D_{i-1}^q$ and $L_i = -[i] L_{i-1}$, and we recall~\cite[Prop.~3.1.6]{Goss} that
\begin{equation}
  [i] = \prod_{\substack{f \in A_+,\ \textup{irred.} \\ \deg(f) \mid i}} f,
  \quad D_i = \prod_{a \in A_{i+}} a,
  \quad L_i = (-1)^i \cdot \lcm(f \in A_{i+}).
\end{equation}
For $m \in \ZZ_+$, we define the Carlitz factorial $\Pi_m$ as follows.  If we write $m = \sum m_i q^i$ with $0 \leqslant m_i \leqslant q-1$, then
\begin{equation} \label{E:Pi}
  \Pi_m = \prod_i D_i^{m_i}.
\end{equation}
For more information about $\Pi_m$ the reader is directed to Goss~\cite[\S 9.1]{Goss}.

For an $\FF_q$-algebra $L$, we let $\tau : L \to L$ denote the $q$-th power Frobenius map, and we let $L[\tau]$ denote the ring of twisted polynomials over $L$, subject to the condition that $\tau c=c^q \tau$ for $c \in L$.  We then define as usual the Carlitz module to be the $\FF_q$-algebra homomorphism $C : A \to A[\tau]$ determined by
\[
  C_\theta = \theta + \tau.
\]
The Carlitz exponential is the $\FF_q$-linear power series,
\begin{equation} \label{E:elogdef}
  e_C(z) = \sum_{i=0}^\infty \frac{z^{q^i}}{D_i}.
\end{equation}
The induced function $e_C : \C \to \C$ is both entire and surjective, and for all $a \in A$,
\[
  e_C(az) = C_a(e_C(z)).
\]
The kernel $\Lambda_C$ of $e_C(z)$ is the $A$-lattice of rank~$1$ given by $\Lambda_C = A\tpi$, where for a fixed $(q-1)$-st root of $-\theta$,
\[
  \tpi = \theta (-\theta)^{1/(q-1)} \prod_{i=1}^\infty \Bigl( 1 - \theta^{1-q^i} \Bigr)^{-1} \in K_{\infty}\bigl( (-\theta)^{1/(q-1)} \bigr)
\]
is called the Carlitz period (see~\cite[\S 3.2]{Goss} or \cite[\S 3.1]{PLogAlg}).  Moreover, we have a product expansion
\begin{equation} \label{E:eCprod}
  e_C(z) = z \prodprime_{\lambda \in \Lambda_C} \biggl( 1 - \frac{z}{\lambda} \biggr) = z \sideset{}{'}{\prod}_{a \in A} \biggl( 1 - \frac{z}{a\tpi} \biggr),
\end{equation}
where the prime indicates omitting the $a=0$ term in the product.  For more information about the Carlitz module, and Drinfeld modules in general, we refer the reader to~\cite[Chs.~3--4]{Goss}.

We will say that a function $f : \Omega \to \C$ is holomorphic if it is rigid analytic in the sense of~\cite{FresnelvdPut}.  We set $\cH(\Omega)$ to be the set of holomorphic functions on $\Omega$.  We define a holomorphic function $u : \Omega \to \C$ by setting
\begin{equation} \label{E:udef}
  u(z): = \frac{1}{e_C(\tpi z)},
\end{equation}
and we note that $u(z)$ is a uniformizing parameter at the infinite cusp of $\Omega$ (see \cite[\S 5]{Gekeler88}), which plays the role of $\bq(z) = e^{2 \pi i z}$ in the classical case.   The function $u(z)$ is $A$-periodic in the sense that $u(z+a) = u(z)$ for all $a \in A$.  The imaginary part of an element $z \in \C$ is set to be
\[
  |z|_i = \inf_{x \in K_\infty} |z-x|_\infty,
\]
which measures the distance from $z$ to the real axis $K_\infty \subseteq \C$.  We will say that an $A$-periodic holomorphic function $f : \Omega \to \C$ is holomorphic at $\infty$ if we can write a convergent series,
\[
  f(z) = \sum_{n=0}^\infty c_n u(z)^n, \quad c_n \in \C, \quad |z|_i \gg 0.
\]
The function $f$ is then determined by the power series $f = \sum c_n u^n \in \power{\C}{u}$, and we call this power series the $u$-expansion of $f$ and the coefficients $c_n$ the $u$-expansion coefficients of $f$.  We set $\cU(\Omega)$ to be the subset of $\cH(\Omega)$ comprising functions on $\Omega$ that are $A$-periodic and holomorphic at $\infty$.  In other words, $\cU(\Omega)$ consists of functions that have $u$-expansions.

We now define hyperdifferential operators and hyperderivatives (see \cite{Conrad00}, \cite{Jeong11}, \cite{UchinoSatoh98} for more details).  For a field $F$ and an independent variable $z$ over $F$, for $j \geqslant 0$ we define the $j$-th hyperdifferential operator $\pd_z^j : F[z] \to F[z]$ by setting
\[
  \pd_z^j(z^n) = \binom{n}{j} z^{n-j}, \quad n \geqslant 0,
\]
where $\binom{n}{j} \in \ZZ$ is the usual binomial coefficient, and extending $F$-linearly.  (By usual convention $\binom{n}{j} = 0$ if $0 \leqslant n < j$.)  For $f \in F[z]$, we call $\pd_z^j(f) \in F[z]$ its $j$-th hyperderivative.  Hyperderivatives satisfy the product rule,
\begin{equation} \label{E:prod}
  \pd_z^j (fg) = \sum_{k=0}^j \pd_z^k(f)\pd_z^{j-k}(g), \quad f, g \in F[z],
\end{equation}
and composition rule,
\begin{equation} \label{E:comp}
  (\pd_z^j \circ \pd_z^k)(f) = (\pd_z^k \circ \pd_z^j)(f) = \binom{j+k}{j} \pd_z^{j+k}(f), \quad f \in F[z].
\end{equation}
Using the product rule one can extend to $\pd_z^j: F(z) \to F(z)$ in a unique way, and $F(z)$ together with the operators $\pd_z^j$ form a hyperdifferential system.  If $F$ has characteristic~$0$, then $\pd_z^j = \frac{1}{j!} \frac{d^j}{dz^j}$, but in characteristic $p$ this holds only for $j \leqslant p-1$.  Furthermore, hyperderivatives satisfy a number of differentiation rules (e.g., product, quotient, power, chain rules), which aid in their description and calculation  (see~\cite[\S 2.2]{Jeong11}, \cite[\S 2.3]{PLogAlg}, for a complete list of rules and historical accounts).  Moreover, if $f \in F(z)$ is regular at $c \in F$, then so is $\pd_z^j(f)$ for each $j \geqslant 0$, and it follows that we have a Taylor expansion,
\begin{equation} \label{E:Taylor}
  f(z) = \sum_{j=0}^\infty \pd_z^j(f)(c) \cdot (z-c)^j \in \power{F}{z-c}.
\end{equation}
In this way we can also extend $\pd_z^j$ uniquely to $\pd_z^j : \laurent{F}{z-c} \to \laurent{F}{z-c}$.

For a holomorphic function $f : \Omega \to \C$, it was proved by Uchino and Satoh~\cite[\S 2]{UchinoSatoh98} that we can define a holomorphic hyperderivative $\pd_z^j(f) : \Omega \to \C$ (taking $F = \C$ in the preceding paragraph).  That is,
 \[
  \pd_z^j : \cH(\Omega) \to \cH(\Omega).
 \]
Moreover they prove that the system of operators $\pd_z^j$ on holomorphic functions inherits the same differentiation rules for hyperderivatives of polynomials and power series.  Thus for $f \in \cH(\Omega)$ and $c \in \Omega$, we have a Taylor expansion,
\[
  f(z) = \sum_{j=0}^\infty \pd_z^j(f)(c) \cdot (z-c)^j \in \power{\C}{z-c}.
\]
We have the following crucial lemma for our later considerations in \S\ref{S:Theta}, where we find new identities for derivatives of functions in $\cU(\Omega)$.

\begin{lem}[{Uchino-Satoh \cite[Lem.~3.6]{UchinoSatoh98}}]
If $f \in \cH(\Omega)$ is $A$-periodic and holomorphic at $\infty$, then so is $\pd_z^j(f)$ for each $j \geqslant 0$.  That is,
\[
\pd_z^j : \cU(\Omega) \to \cU(\Omega), \quad j \geqslant 0.
\]
\end{lem}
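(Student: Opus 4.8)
The plan is to show that $\pd_z^j(f)$ is again $A$-periodic and holomorphic at $\infty$ whenever $f$ is. Periodicity is the easy half: since $f(z+a) = f(z)$ for all $a \in A$, and since hyperderivatives commute with the substitution $z \mapsto z+a$ (which is an $\FF_q$-linear change of variable with $\pd_z^j$ acting the same way before and after translation by a constant), we immediately obtain $\pd_z^j(f)(z+a) = \pd_z^j(f)(z)$. So the real content is holomorphy at $\infty$, i.e.\ exhibiting a $u$-expansion for $\pd_z^j(f)$.

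The key idea is to reduce everything to the single function $u = u(z) = 1/e_C(\tpi z)$. Since $f$ is holomorphic at $\infty$, we may write $f = \sum_{n\geqslant 0} c_n u^n$ for $|z|_i \gg 0$. By the already-established fact that $\pd_z^j : \cH(\Omega) \to \cH(\Omega)$ and respects the differentiation rules (product rule \eqref{E:prod}, composition rule \eqref{E:comp}, and the chain rule), I would argue that it suffices to understand $\pd_z^j(u^n)$, and for that it suffices to understand $\pd_z^j(u)$ together with the power and product rules. Concretely, the first step is to compute $\pd_z^1(u) = \frac{d}{dz} u$. Differentiating $e_C(\tpi z) \cdot u = 1$ and using $e_C'(\tpi z) = \tpi$ (the Carlitz exponential has derivative equal to the leading coefficient of its $\FF_q$-linear series, namely $1$, so $\frac{d}{dz} e_C(\tpi z) = \tpi$) gives $\pd_z^1(u) = -\tpi\, u^2$. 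This is exactly Gekeler's relation underlying \eqref{E:Theta1}, and it shows $\pd_z^1(u)$ is itself a power series in $u$.

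From here the strategy is inductive. The second step is to show by induction on $j$, using the composition rule \eqref{E:comp} to build $\pd_z^j$ out of lower hyperderivatives together with the chain rule for the composite $u = u(z)$, that each $\pd_z^j(u)$ lies in $\power{\C}{u}$ — more precisely that $\pd_z^j(u)$ is a polynomial in $u$ with coefficients in $K(\tpi)$ (it should begin in degree $j+1$). Granting this, the power rule for hyperderivatives expresses $\pd_z^k(u^n)$ as a $\C$-linear combination of products of the $\pd_z^i(u)$, hence again an element of $\power{\C}{u}$; and the product rule \eqref{E:prod} assembles $\pd_z^j\bigl(\sum c_n u^n\bigr) = \sum_{k=0}^j \pd_z^{\,?}(\cdots)$ into a convergent $u$-expansion. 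The third step is therefore a convergence check: one must verify that term-by-term hyperdifferentiation of the series $\sum c_n u^n$ is legitimate on the region $|z|_i \gg 0$ and that the resulting double series can be rearranged into a single power series in $u$ whose coefficients converge in $\C$.

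The main obstacle I expect is this last analytic point rather than the formal algebra. The formal manipulations with the product, composition, and chain rules are routine once $\pd_z^1(u) = -\tpi u^2$ is in hand, but justifying that the hyperderivative of the \emph{infinite} $u$-expansion of $f$ converges and equals the $u$-expansion of $\pd_z^j(f)$ requires care: one needs uniform estimates on $|u(z)|$ and on the coefficients appearing in $\pd_z^j(u^n)$ for large $|z|_i$, so that interchanging $\pd_z^j$ with the infinite sum is valid in the rigid-analytic setting of \cite{UchinoSatoh98}. This is where the holomorphy of $\pd_z^j$ on $\cH(\Omega)$, together with the fact that $u(z) \to 0$ as $|z|_i \to \infty$, does the decisive work: it guarantees that $\pd_z^j(f)$ is holomorphic and $A$-periodic, and the tail estimates then force its Taylor-type expansion to collapse into an honest power series in $u$.
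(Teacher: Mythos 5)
You are proving a lemma that the paper itself does not prove (it is quoted from Uchino--Satoh), so I am judging your sketch on its own terms and against the machinery the paper develops later. The periodicity half and the computation $\pd_z^1(u) = -\tpi u^2$ are fine. The genuine gap is your second step: you propose to show by induction on $j$, ``using the composition rule \eqref{E:comp} to build $\pd_z^j$ out of lower hyperderivatives,'' that $\pd_z^j(u)$ is a polynomial in $u$. In characteristic $p$ this induction cannot get off the ground at $j=p$ (or at any $j=p^k$): the composition rule gives $\pd_z^k\circ\pd_z^{p-k}=\binom{p}{k}\pd_z^p=0$ for $0<k<p$, so no composition of lower-order operators carries any information about $\pd_z^p(u)$. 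The operators $\pd_z^{p^k}$ are simply not generated under composition by lower ones --- this is exactly the content of \eqref{E:hyppthpowers}, which reduces $\pd_z^j$ only as far as the irreducible pieces $\pd_z^{p^k}$. Consequently, knowing $\pd_z^1(u)=-\tpi u^2$ determines $\pd_z^j(u)$ only for $j<p$. The same defect infects your appeal to the power rule: the Fa\`a di Bruno expansion of $\pd_z^r(u^n)$ (used in the proof of Theorem~\ref{T:Gosspolydiffs}) involves all of $\pd_z^{\ell}(u)$ for $\ell\leqslant r$, not just the first hyperderivative.

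What is needed is a direct computation of $\pd_z^r(u)$ for \emph{every} $r$, and this is precisely what the paper supplies in \S\ref{S:Gosspolys}--\ref{S:Theta}: starting from the partial-fraction expansion \eqref{E:tsum}, $u(z)=\tpi^{-1}\sum_{a\in A}(z+a)^{-1}$, termwise differentiation gives $\pd_z^r(u)=(-\tpi)^r G_{r+1}(u)$, a polynomial in $u$ of degree $r+1$ (this is \eqref{E:diffrt} combined with Remark~\ref{R:uvst}, or Corollary~\ref{C:Thetau} with $n=1$). Alternatively, you could observe that $\pd_z^j\bigl(e_C(\tpi z)\bigr)$ is a \emph{constant} for every $j\geqslant 1$ (equal to $\tpi^{q^i}/D_i$ when $j=q^i$ and zero otherwise, by Lucas's theorem applied to $\binom{q^i}{j}$) and then apply the quotient rule to $u=1/e_C(\tpi z)$, which expresses $\pd_z^j(u)$ as a $\C$-linear combination of powers $u^{k+1}$. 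Once $\pd_z^r(u)\in\C[u]$ is in hand for all $r$, your assembly of $\pd_z^j\bigl(\sum c_n u^n\bigr)$ goes through; the convergence and interchange-of-limits issue you flag at the end is real but is exactly what the rigid-analytic continuity of $\pd_z^j$ on $\cH(\Omega)$ established in \cite{UchinoSatoh98} is for, as you indicate.
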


We recall computations involving $u(z)$ and $\pd_z^1(u(z))$ (see \cite[\S 3]{Gekeler88}).  First we see from~\eqref{E:elogdef} that $\pd_z^1(e_C(z)) = 1$, so using \eqref{E:eCprod} and taking logarithmic derivatives,
\begin{equation} \label{E:tsum}
  u(z) = \frac{1}{e_C(\tpi z)} = \frac{1}{\tpi} \cdot \frac{ \pd_z^1(e_C(\tpi z))}{e_C(\tpi z)}
  = \frac{1}{\tpi} \sum_{a \in A} \frac{1}{z+a}.
\end{equation}
Furthermore,
\begin{equation} \label{E:uder1z}
  \pd_z^1(u(z)) = \pd_z^1 \biggl( \frac{1}{e_C(\tpi z)} \biggr) = \frac{-\pd_z^1(e_C(\tpi z))}{e_C(\tpi z)^2} = -\tpi u(z)^2.
\end{equation}
Thus, $\pd_z^1(u) = -\tpi u^2 \in \cU(\Omega)$.  In \S\ref{S:Theta} we generalize this formula and calculate $\pd_z^r(u^n)$ for $r$, $n \geqslant 0$.

We conclude this section by discussing some properties of hyperderivatives particular to positive characteristic.  Suppose $\ch(F) = p >0$.  If we write $j = \sum_{i=0}^s b_i p^i$, with $0 \leqslant b_i \leqslant p-1$ and $b_s \neq 0$, then (see \cite[Thm.~3.1]{Jeong11})
\begin{equation} \label{E:hyppthpowers}
  \pd_z^j = \pd_z^{b_0} \circ \pd_z^{b_1 p} \circ \cdots
  \circ \pd_z^{b_s p^s},
\end{equation}
which follows from the composition law and  Lucas's theorem (e.g., see \cite[Eq.~(14)]{BosserPellarin08}).  We note that for $0 \leqslant b \leqslant p-1$,
\[
  \pd_z^{bp^k} = \frac{1}{b!} \cdot \pd_z^{p^k} \circ \cdots \circ \pd_z^{p^k}, \quad \textup{($b$ times).}
\]
Moreover the $p$-th power rule (see \cite[\S 7]{Brownawell99}, \cite[\S 2.2]{Jeong11}) says that for $f \in \laurent{F}{z-c}$,
\begin{equation} \label{E:hyppthpowers2}
  \pd_z^j\bigl( f^{p^s} \bigr) =
  \begin{cases}
  \bigl( \pd_z^{\ell}(f) \bigr)^{p^s} & \textup{if $j = \ell p^s$,} \\
  0 & \textup{otherwise,}
  \end{cases}
\end{equation}
and so calculation using \eqref{E:hyppthpowers} and \eqref{E:hyppthpowers2} can often be fairly efficient.

\section{Goss polynomials and hyperderivatives} \label{S:Gosspolys}

We review here results on Goss polynomials, which were introduced by Goss in~\cite[\S 6]{Goss80c} and have been studied further by Gekeler~\cite[\S 3]{Gekeler88}, \cite{Gekeler13}.  We start first with an $\FF_q$-vector space $\Lambda \subseteq \C$ of dimension $d$.  We define the exponential function of~$\Lambda$,
\[
  e_\Lambda(z) = z \prodprime_{\lambda \in \Lambda} \biggl( 1 - \frac{z}{\lambda} \biggr),
\]
which is an $\FF_q$-linear polynomial of degree~$q^d$.  If we take $t_\Lambda(z) = 1/e_{\Lambda}(z)$, then just as in~\eqref{E:tsum} we have
\[
t_{\Lambda}(z) = \sum_{\lambda \in \Lambda} \frac{1}{z-\lambda}.
\]
We can extend these definitions to any discrete lattice $\Lambda \subseteq \C$, which is the union of nested finite dimensional $\FF_q$-vector spaces $\Lambda_1 \subseteq \Lambda _2 \subseteq \cdots$.  We find that generally $e_{\Lambda}(z) = \lim_{i \to \infty} e_{\Lambda_i}(z)$ and $t_{\Lambda}(z) = \lim_{i \to \infty} t_{\Lambda_i}(z)$, where the convergence is coefficient-wise in $\laurent{\C}{z}$.

\begin{rem} \label{R:uvst}
If we take $\Lambda = \Lambda_C$, then $e_{\Lambda_C}(z) = e_C(z)$, whereas if we take $\Lambda = A$, then $e_{A}(z) = \frac{1}{\tpi} e_C(\tpi z)$.  Thus
\[
  t_A(z) = \tpi\, t_{\Lambda_C}(\tpi z) = \frac{\tpi}{e_C(\tpi z)},
\]
and $u(z)$, as defined in~\eqref{E:udef}, is given by
\[
  u(z) = \frac{t_A(z)}{\tpi} = t_{\Lambda_C}(\tpi z).
\]
This normalization of $u(z)$ is taken so that the $u$-expansions of some Drinfeld modular forms will have $K$-rational coefficients.
\end{rem}

\begin{thm}[{Goss \cite[\S 6]{Goss80c}; see also Gekeler~\cite[\S 3]{Gekeler88}}] \label{T:Gosspolys}
Let $\Lambda \subseteq \C$ be a discrete $\FF_q$-vector space.  Let
\[
e_{\Lambda}(z) = z \prodprime_{\lambda \in \Lambda} \biggl( 1 - \frac{z}{\lambda} \biggr) = \sum_{j=0}^\infty \alpha_j z^{q^j},
\]
and let $t_{\Lambda}(z) = 1/e_{\Lambda}(z)$.  For each $k \geqslant 1$, there is a monic polynomial $G_{k,\Lambda}(t)$ of degree $k$ with coefficients in $\FF_q[\alpha_0, \alpha_1, \ldots]$ so that
\[
  S_{k,\Lambda}(z) := \sum_{\lambda \in \Lambda} \frac{1}{(z-\lambda)^k} = G_{k,\Lambda}\bigl(
  t_{\Lambda}(z) \bigr).
\]
Furthermore the following properties hold.
\begin{enumerate}
\item[(a)] $G_{k,\Lambda}(t) = t (G_{k-1,\Lambda}(t) + \alpha_1 G_{k-q,\Lambda}(t) + \alpha_2 G_{k-q^2,\Lambda}(t) + \cdots )$.
\item[(b)] We have a generating series identity
\[
  \cG_{\Lambda}(t,x) = \sum_{k=1}^\infty G_{k,\Lambda}(t) x^k = \frac{tx}{1 - t e_{\Lambda}(x)}.
\]
\item[(c)] If $k \leqslant q$, then $G_{k,\Lambda}(t) = t^k$.
\item[(d)] $G_{pk,\Lambda}(t) = G_{k,\Lambda}(t)^p$.
\item[(e)] $t^2\, \pd_t^1 \bigl( G_{k,\Lambda}(t) \bigr) = k G_{k+1,\Lambda}(t)$.
\end{enumerate}
\end{thm}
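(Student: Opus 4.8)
The plan is to establish the generating-series identity (b) first, since it simultaneously produces the polynomials $G_{k,\Lambda}$ and thereby proves the existence statement, after which the remaining properties fall out by formal manipulation. I would work first with a finite-dimensional $\Lambda$, so that $e_\Lambda$ is an honest polynomial and every sum below is either finite or manifestly convergent. Fixing $z \notin \Lambda$ and expanding the two-variable series in $x$ geometrically gives
\[
\sum_{k=1}^\infty S_{k,\Lambda}(z)\, x^k = \sum_{\lambda \in \Lambda}\sum_{k=1}^\infty \Bigl(\frac{x}{z-\lambda}\Bigr)^k = \sum_{\lambda \in \Lambda}\frac{x}{(z-x)-\lambda} = x\, t_\Lambda(z-x).
\]
The crucial input is that $e_\Lambda$ is $\FF_q$-linear, hence additive: since $(z-x)^{q^j} = z^{q^j} - x^{q^j}$ in characteristic $p$, we have $e_\Lambda(z-x) = e_\Lambda(z) - e_\Lambda(x)$. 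Substituting $t_\Lambda(z-x) = 1/(e_\Lambda(z) - e_\Lambda(x))$ and dividing numerator and denominator by $e_\Lambda(z) = 1/t_\Lambda(z)$ yields
\[
\sum_{k=1}^\infty S_{k,\Lambda}(z)\, x^k = \frac{x\, t_\Lambda(z)}{1 - t_\Lambda(z)\, e_\Lambda(x)}.
\]
Writing $t = t_\Lambda(z)$ and expanding $1/(1 - t\,e_\Lambda(x)) = \sum_{n \ge 0} t^n e_\Lambda(x)^n$, the coefficient of $x^k$ on the right is a polynomial in $t$ with coefficients in $\FF_q[\alpha_0,\alpha_1,\ldots]$; call it $G_{k,\Lambda}(t)$. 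Comparing coefficients of $x^k$ gives both $S_{k,\Lambda}(z) = G_{k,\Lambda}(t_\Lambda(z))$ and the identity (b), and since $e_\Lambda(x) = x + \alpha_1 x^q + \cdots$ has lowest term $x$, the top contribution to the $x^k$-coefficient comes from $n = k-1$ and equals $t^k$, so $G_{k,\Lambda}$ is monic of degree $k$.

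For (a) I would clear denominators in (b), writing $\cG_\Lambda(t,x)\bigl(1 - t\,e_\Lambda(x)\bigr) = tx$, substitute $e_\Lambda(x) = \sum_{j\ge 0}\alpha_j x^{q^j}$ with $\alpha_0 = 1$, and compare coefficients of $x^k$: the case $k=1$ forces $G_{1,\Lambda}(t) = t$, while for $k \ge 2$ the term $tx$ drops out and one reads off $G_{k,\Lambda}(t) = t\sum_{q^j < k}\alpha_j\, G_{k-q^j,\Lambda}(t)$, which is exactly (a) under the convention $G_{m,\Lambda} = 0$ for $m \le 0$. Property (c) then follows by induction: for $1 \le k \le q$ every term $G_{k-q^j,\Lambda}$ with $j \ge 1$ vanishes, leaving $G_{k,\Lambda}(t) = t\, G_{k-1,\Lambda}(t) = t^k$.

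Properties (d) and (e) I would deduce from the relation $S_{k,\Lambda}(z) = G_{k,\Lambda}(t_\Lambda(z))$ together with the principle that an identity of polynomials in $t_\Lambda(z)$ holding for all $z \notin \Lambda$ forces the corresponding identity in the indeterminate $t$, because $t_\Lambda$ assumes infinitely many values in $\C$. For (d), additivity of Frobenius on convergent sums gives $S_{k,\Lambda}(z)^p = \sum_{\lambda}(z-\lambda)^{-pk} = S_{pk,\Lambda}(z)$, whence $G_{k,\Lambda}(t)^p = G_{pk,\Lambda}(t)$. For (e), I would apply the first hyperderivative $\pd_z^1$ (which coincides with the ordinary derivative) to both sides: the left side gives $-k\, S_{k+1,\Lambda}(z) = -k\, G_{k+1,\Lambda}(t_\Lambda(z))$, while the chain rule gives $\pd_t^1(G_{k,\Lambda})(t_\Lambda(z)) \cdot \pd_z^1(t_\Lambda(z))$. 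Since $e_\Lambda'(z) = \alpha_0 = 1$ (all higher terms having derivative zero in characteristic $p$), one computes $\pd_z^1(t_\Lambda(z)) = -t_\Lambda(z)^2$ exactly as in \eqref{E:uder1z}, and equating the two expressions yields $t^2\, \pd_t^1(G_{k,\Lambda}(t)) = k\, G_{k+1,\Lambda}(t)$.

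Finally, passing from a finite-dimensional $\Lambda$ to an arbitrary discrete $\Lambda$ is routine: each $G_{k,\Lambda}$ involves only the finitely many $\alpha_j$ with $q^j \le k$, and these are the coefficient-wise limits of the corresponding quantities for the exhausting subspaces $\Lambda_1 \subseteq \Lambda_2 \subseteq \cdots$, so every identity survives the limit. The step requiring the most care is the generating-series computation itself — justifying the interchange of the sums over $k$ and $\lambda$ and the additivity substitution $e_\Lambda(z-x) = e_\Lambda(z) - e_\Lambda(x)$ — after which all of (a)--(e) are essentially formal consequences.
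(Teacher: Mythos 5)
Your proof is correct. Note that the paper itself gives no proof of this theorem --- it is recalled from Goss and Gekeler with a citation --- and your argument is essentially the standard one from those sources: derive the generating identity (b) from the partial-fraction expansion of $t_\Lambda$ and the $\FF_q$-linearity $e_\Lambda(z-x)=e_\Lambda(z)-e_\Lambda(x)$, then obtain (a), (c), (d), (e) as formal consequences (with (e) matching the computation the paper records as \eqref{E:uder1z}). The two points needing care --- the interchange of the sums over $k$ and $\lambda$, and the coefficient-wise passage from finite-dimensional subspaces to a general discrete $\Lambda$ --- are exactly the ones you flag, and both are handled correctly.
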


Gekeler~\cite[(3.8)]{Gekeler88} finds a formula for each $G_{k,\Lambda}(t)$,
\begin{equation} \label{E:Gossexpansion}
  G_{k+1,\Lambda}(t) = \sum_{j=0}^k \sum_{\ui} \binom{j}{\ui} \alpha^{\ui} t^{j+1},
\end{equation}
where the sum is over all $(s+1)$-tuples $\ui = (i_0, \dots, i_s)$, with $s$ arbitrary, satisfying $i_0 + \cdots + i_s = j$ and $i_0 + i_1q + \cdots + i_sq^s = k$; $\binom{j}{\ui} = j!/(i_0! \cdots i_s!)$ is a multinomial coefficient; and $\alpha^{\ui} = \alpha_0^{i_0} \cdots \alpha_s^{i_s}$.

Part~(e) of Theorem~\ref{T:Gosspolys} indicates that there are interesting hyperderivative relations among Goss polynomials, with respect to $t$ and to $z$, which we now investigate.  All hyperderivatives we will take will be of polynomials and formal power series, but the considerations in \S\ref{S:functions} about holomorphic functions will play out later in the paper.  The main result of this section is the following.

\begin{thm} \label{T:Gosspolydiffs}
Let $\Lambda \subseteq \C$ be a discrete $\FF_q$-vector space, and let $t = t_{\Lambda}(z)$.  For $r \geqslant 0$, we define $\beta_{r,j}$ so that
\[
  G_{r+1,\Lambda}(t) = \sum_{j=0}^{r} \beta_{r,j} t^{j+1}.
\]
Then
\begin{equation} \label{E:tndiff}
\tag{\ref{T:Gosspolydiffs}a}
\pd_z^r ( t^n ) = (-1)^r \cdot t^n\, \pd_t^{n-1} \bigl( t^{n-2} G_{r+1,\Lambda}(t) \bigr), \quad \forall\, n \geqslant 1,
\end{equation}
\begin{equation} \label{E:tndiffbeta}
\tag{\ref{T:Gosspolydiffs}b}
\pd_z^r (t^n) = (-1)^r \sum_{j=0}^r \beta_{r,j}\, t^{j+1}\, \pd_t^{j} (t^{n+j-1}), \quad \forall\, n \geqslant 0,
\end{equation}
and
\begin{equation} \label{E:Gossdiff}
\tag{\ref{T:Gosspolydiffs}c}
\binom{n+r-1}{r} G_{n+r,\Lambda}(t) = \sum_{j=0}^{r}
\beta_{r,j}\, t^{j+1}\, \pd_t^{j} \bigl( t^{j-1} G_{n,\Lambda}(t) \bigr),\quad \forall\, n \geqslant 1.
\end{equation}
\end{thm}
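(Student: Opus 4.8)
The plan is to read off all three identities from a single two-variable generating function built from the additivity of $e_\Lambda$. The only analytic input is the hyperderivative Taylor expansion \eqref{E:Taylor}, which in a second formal variable $s$ reads $g(z+s) = \sum_{r\geqslant 0}\pd_z^r(g)\,s^r$ for any $g$. Since $e_\Lambda(z) = \sum_j \alpha_j z^{q^j}$ is additive, I would first record the shift formula
\[
  t_\Lambda(z+s) = \frac{1}{e_\Lambda(z+s)} = \frac{1}{e_\Lambda(z)+e_\Lambda(s)} = \frac{t}{1+t\,e_\Lambda(s)},
\]
where $t = t_\Lambda(z)$, and then raise it to the $n$-th power to obtain $\sum_{r}\pd_z^r(t^n)\,s^r = t_\Lambda(z+s)^n = t^n\bigl(1+t\,e_\Lambda(s)\bigr)^{-n}$ in $\power{\cH(\Omega)}{s}$.

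Expanding the right-hand side by the binomial series gives $t^n(1+te_\Lambda(s))^{-n} = \sum_{j\geqslant 0}(-1)^j\binom{n+j-1}{j}t^{n+j}e_\Lambda(s)^j$, so comparing coefficients of $s^r$ yields $\pd_z^r(t^n) = \sum_j (-1)^j\binom{n+j-1}{j}t^{n+j}\,[s^r]e_\Lambda(s)^j$, where $[s^r]$ denotes the coefficient of $s^r$. The key is to identify $[s^r]e_\Lambda(s)^j$ with $\beta_{r,j}$: writing the generating series of Theorem~\ref{T:Gosspolys}(b) as $\tfrac{tx}{1-te_\Lambda(x)} = \sum_j t^{j+1}x\,e_\Lambda(x)^j$ and reading off the coefficient of $x^{r+1}$ shows exactly $\beta_{r,j} = [x^r]e_\Lambda(x)^j$. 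Rewriting $\binom{n+j-1}{j}t^{n+j} = t^{j+1}\pd_t^j(t^{n+j-1})$ then gives \eqref{E:tndiffbeta} up to the sign discussed below, while the identity $\binom{n+j-1}{j} = \binom{n+j-1}{n-1}$ and the resummation $t^n\pd_t^{n-1}(t^{n-2}G_{r+1,\Lambda}(t)) = \sum_j\beta_{r,j}\binom{n+j-1}{n-1}t^{n+j}$ give \eqref{E:tndiff}.

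The step I expect to be the crux is reconciling the $(-1)^j$ produced by the binomial expansion with the uniform factor $(-1)^r$ in the statements. This is a parity phenomenon: because every exponent in $e_\Lambda(s) = \sum_i \alpha_i s^{q^i}$ satisfies $q^i \equiv 1 \pmod{q-1}$, each monomial of $e_\Lambda(s)^j$ has exponent $\equiv j \pmod{q-1}$, so $\beta_{r,j} = [s^r]e_\Lambda(s)^j = 0$ unless $r \equiv j \pmod{q-1}$. When $q$ is odd this forces $r\equiv j \pmod 2$ and hence $(-1)^j = (-1)^r$ on the support of $\beta_{r,j}$, while for $q$ even the sign is vacuous; in either case $(-1)^j\beta_{r,j} = (-1)^r\beta_{r,j}$, which pulls the sign out of the sum and matches the stated normalization. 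The termwise reading of $s$-coefficients is harmless, since both sides are genuine elements of $\power{\cH(\Omega)}{s}$.

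Finally, for \eqref{E:Gossdiff} I would apply $\pd_z^r$ to the defining identity $G_{n,\Lambda}(t) = S_{n,\Lambda}(z) = \sum_{\lambda\in\Lambda}(z-\lambda)^{-n}$ of Theorem~\ref{T:Gosspolys} and evaluate it in two ways. Differentiating the sum termwise with $\pd_z^r((z-\lambda)^{-n}) = \binom{-n}{r}(z-\lambda)^{-n-r}$ and $\binom{-n}{r} = (-1)^r\binom{n+r-1}{r}$ collapses it to $(-1)^r\binom{n+r-1}{r}S_{n+r,\Lambda}(z) = (-1)^r\binom{n+r-1}{r}G_{n+r,\Lambda}(t)$. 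On the other hand, writing $G_{n,\Lambda}(t) = \sum_i \gamma_i t^i$, applying \eqref{E:tndiffbeta} to each monomial, and pulling the $i$-independent operator $t^{j+1}\pd_t^j$ outside the sum over $i$ gives $(-1)^r\sum_j \beta_{r,j}\,t^{j+1}\pd_t^j(t^{j-1}G_{n,\Lambda}(t))$, using $\sum_i\gamma_i t^{i+j-1} = t^{j-1}G_{n,\Lambda}(t)$. Equating the two evaluations and cancelling $(-1)^r$ yields \eqref{E:Gossdiff}; the only delicate point is the termwise hyperdifferentiation of the infinite sum $S_{n,\Lambda}$, which is justified by the passage to the nested finite-dimensional subspaces used to define $e_\Lambda$ and $t_\Lambda$.
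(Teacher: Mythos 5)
Your proof is correct, and while it arrives at the same central generating-function identity as the paper, it gets there by a genuinely different route. The paper computes $\pd_z^r(t^n)$ with the chain rule for hyperderivatives, feeding in $\pd_z^{\ell}(t) = (-1)^{\ell}G_{\ell+1,\Lambda}(t)$ from Lemma~\ref{L:Lem1}, recognizes the answer as the coefficient of $x^{r+1}$ in $t^n x/(1-te_{\Lambda}(x))^n$, and then needs a separate computation (Lemma~\ref{L:genfundiff}) identifying that series with $\pd_t^{n-1}\bigl(t^{n-2}\cG_{\Lambda}(t,x)\bigr)$ to land on \eqref{E:tndiff}, from which \eqref{E:tndiffbeta} is deduced. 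You obtain the same series in one stroke from the $\FF_q$-linearity of $e_\Lambda$ via $t_\Lambda(z+s) = t/(1+t\,e_\Lambda(s))$ and the Taylor characterization of hyperderivatives, expand by the binomial series, and read off \eqref{E:tndiffbeta} first, recovering \eqref{E:tndiff} by resummation --- the reverse order from the paper. This bypasses both the chain rule and Lemma~\ref{L:genfundiff}, at the cost of the sign bookkeeping: your observation that $\beta_{r,j} = [x^r]e_\Lambda(x)^j$ vanishes unless $r \equiv j \pmod{q-1}$, so that $(-1)^j$ may be replaced by $(-1)^r$ on the support of the sum, is exactly the extra ingredient required, and it is correct (for $q$ odd, $q-1$ is even; for $q$ even, signs are vacuous), whereas in the paper's derivation the uniform $(-1)^r$ falls out automatically from Lemma~\ref{L:Lem1}(c). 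For \eqref{E:Gossdiff} the two arguments essentially coincide --- both rest on termwise differentiation of $S_{n,\Lambda} = G_{n,\Lambda}(t)$ and on applying the earlier parts to the monomials of a Goss polynomial --- except that the paper routes through the duality \eqref{E:diffrSndual} and applies \eqref{E:tndiff}, while you apply $\pd_z^r$ and \eqref{E:tndiffbeta} directly. Your appeal to the nested finite-dimensional subspaces to justify the formal manipulations for infinite $\Lambda$ is at the same level of rigor as the paper's own treatment.
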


\begin{rem}
We see that \eqref{E:tndiff} and~\eqref{E:tndiffbeta} generalize~\eqref{E:uder1z} and that~\eqref{E:Gossdiff} generalizes Theorem~\ref{T:Gosspolys}(e).  In later sections \eqref{E:tndiff} and~\eqref{E:tndiffbeta} will be useful for taking derivatives of Drinfeld modular forms.  The coefficients $\beta_{r,j}$ can be computed using the generating series $\cG_{\Lambda}(t,x)$ or equivalently~\eqref{E:Gossexpansion}.  The proof requires some preliminary lemmas.
\end{rem}

\begin{lem}[{cf.~Petrov \cite[\S 3]{Petrov15}}] \label{L:Lem1}
For $r \geqslant 0$ and $n \geqslant 1$,
\begin{equation} \label{E:diffrSnG}
\tag{\ref{L:Lem1}a}
\pd_z^r\bigl( S_{n,\Lambda}(z) \bigr) = (-1)^r \binom{n+r-1}{r} G_{n+r,\Lambda}(t).
\end{equation}
Moreover, we have
\begin{equation} \label{E:diffrSndual}
\tag{\ref{L:Lem1}b}
\pd_z^r \bigl( S_{n,\Lambda}(z) \bigr) = (-1)^{n+r-1} \cdot \pd_z^{n-1} \bigl( S_{r+1,\Lambda}(z) \bigr)
\end{equation}
and
\begin{equation} \label{E:diffrt}
\tag{\ref{L:Lem1}c}
\pd_z^r(t) = (-1)^r G_{r+1,\Lambda}(t).
\end{equation}
\end{lem}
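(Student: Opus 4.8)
My plan is to prove part~(a) by a direct termwise hyperdifferentiation, and then to deduce parts~(c) and~(b) as essentially formal consequences. For~(a) I would begin with the nested finite-dimensional approximations $\Lambda_1 \subseteq \Lambda_2 \subseteq \cdots$ whose union is $\Lambda$. For each fixed $i$ the partial sum $S_{n,\Lambda_i}(z) = \sum_{\lambda \in \Lambda_i}(z-\lambda)^{-n}$ is a finite $\C$-linear combination of rational functions, so the extension of $\pd_z^r$ to $\C(z)$ from \S\ref{S:functions} applies. The key input is the power rule for a negative power, $\pd_z^r\bigl((z-\lambda)^{-n}\bigr) = \binom{-n}{r}(z-\lambda)^{-n-r}$, together with the binomial identity $\binom{-n}{r} = (-1)^r\binom{n+r-1}{r}$. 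Summing over $\lambda \in \Lambda_i$ gives
\[
\pd_z^r\bigl(S_{n,\Lambda_i}(z)\bigr) = (-1)^r\binom{n+r-1}{r}S_{n+r,\Lambda_i}(z).
\]
Letting $i \to \infty$, using the coefficient-wise convergence $S_{m,\Lambda_i} \to S_{m,\Lambda}$ in $\laurent{\C}{z}$ (this is the same convergence recorded before Theorem~\ref{T:Gosspolys} for $t_\Lambda = S_{1,\Lambda}$) together with the continuity of $\pd_z^r$ in that topology, yields $\pd_z^r(S_{n,\Lambda}(z)) = (-1)^r\binom{n+r-1}{r}S_{n+r,\Lambda}(z)$; Theorem~\ref{T:Gosspolys} then identifies $S_{n+r,\Lambda}(z) = G_{n+r,\Lambda}(t)$, which is~(a).

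Part~(c) is the special case $n=1$: since $S_{1,\Lambda}(z) = t_\Lambda(z) = t$ and $\binom{r}{r}=1$, formula~(a) reads $\pd_z^r(t) = (-1)^r G_{r+1,\Lambda}(t)$. For part~(b) I would simply apply~(a) to each side. The left-hand side is $(-1)^r\binom{n+r-1}{r}G_{n+r,\Lambda}(t)$, while
\[
(-1)^{n+r-1}\pd_z^{n-1}\bigl(S_{r+1,\Lambda}(z)\bigr) = (-1)^{n+r-1}(-1)^{n-1}\binom{n+r-1}{n-1}G_{n+r,\Lambda}(t).
\]
These two expressions agree because $\binom{n+r-1}{r} = \binom{n+r-1}{n-1}$ by symmetry of binomial coefficients and $(-1)^{(n+r-1)+(n-1)} = (-1)^{2n-2+r} = (-1)^r$, so~(b) follows at once.

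The only genuine subtlety, and the step I expect to require the most care, is the interchange of hyperdifferentiation with the infinite lattice sum, that is, justifying the passage $i \to \infty$ in~(a). This rests on the coefficient-wise convergence of the partial sums $S_{m,\Lambda_i}$ and on the continuity of each $\pd_z^r$ with respect to coefficient-wise convergence in $\laurent{\C}{z-c}$; once this is granted, the remainder of the argument is just the formal power rule and an elementary binomial manipulation.
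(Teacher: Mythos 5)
Your proposal is correct and follows essentially the same route as the paper: termwise application of the power rule $\pd_z^r\bigl((z-\lambda)^{-n}\bigr)=\binom{-n}{r}(z-\lambda)^{-n-r}$ with $\binom{-n}{r}=(-1)^r\binom{n+r-1}{r}$, the identification $S_{n+r,\Lambda}(z)=G_{n+r,\Lambda}(t)$, then (c) as the $n=1$ case and (b) via the symmetry $\binom{n+r-1}{r}=\binom{n+r-1}{n-1}$. The only difference is that you spell out the limit over the finite-dimensional approximations $\Lambda_i$, which the paper leaves implicit.
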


\begin{proof}
First of all, we recall the convention that for $n>0$ and $r \geqslant 0$, we have $\binom{-n}{r} = (-1)^r\binom{n+r-1}{r}$.  Then using the power and quotient rules~\cite[\S 2.2]{Jeong11}, we see that for $\lambda \in \C$,
\[
  \pd_z^r \biggl( \frac{1}{(z-\lambda)^n} \biggr) = \binom{-n}{r} \frac{1}{(z-\lambda)^{n+r}}
  = (-1)^r \binom{n+r-1}{r} \frac{1}{(z-\lambda)^{n+r}}.
\]
Therefore,
\[
  \pd_z^r\bigl( S_{n,\Lambda}(z) \bigr) = (-1)^r \binom{n+r-1}{r} S_{n+r,\Lambda}(z),
\]
and combining with the defining property of $G_{n+r,\Lambda}(t)$ in Theorem~\ref{T:Gosspolys}, we see that~\eqref{E:diffrSnG} follows.  Now
\[
  \binom{n+r-1}{r} = \binom{(r+1) + (n-1) - 1}{n-1},
\]
and so~\eqref{E:diffrSndual} follows from~\eqref{E:diffrSnG}.  Finally, \eqref{E:diffrt} is a special case of \eqref{E:diffrSnG} with $n=1$.
\end{proof}

\begin{lem} \label{L:genfundiff}
For $n \geqslant 1$, we have an identity of rational functions in $x$,
\[
  \frac{x}{(1-t e_\Lambda(x))^n} = \pd_t^{n-1} \biggl( \frac{ t^{n-1} x}{1 - t e_\Lambda(x)} \biggr)
  = \pd_t^{n-1} \bigl( t^{n-2} \cG_{\Lambda}(t,x) \bigr).
\]
\end{lem}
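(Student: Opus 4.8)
The plan is to prove the two equalities separately, with the second being essentially a matter of unwinding definitions and the first being the real content. For the second equality I would simply observe that
\[
  \frac{t^{n-1}x}{1 - te_{\Lambda}(x)} = t^{n-2}\cdot\frac{tx}{1-te_{\Lambda}(x)} = t^{n-2}\,\cG_{\Lambda}(t,x)
\]
directly from the generating-series formula in Theorem~\ref{T:Gosspolys}(b), so no work beyond substitution is needed there.

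For the first equality the key idea is to treat $w := e_{\Lambda}(x)$ as a scalar that is constant with respect to $t$ (it is a polynomial in $x$ alone), so that after cancelling the common factor of $x$ it suffices to prove the identity of rational functions in $t$,
\[
  \pd_t^{n-1}\biggl( \frac{t^{n-1}}{1 - tw} \biggr) = \frac{1}{(1-tw)^n}.
\]
First I would record the auxiliary formula $\pd_t^{j}\bigl( (1-tw)^{-1} \bigr) = w^{j}(1-tw)^{-(j+1)}$ for all $j \geqslant 0$, which follows from the formal geometric expansion $\tfrac{1}{1-tw} = \sum_{k\geqslant 0} (tw)^k$ together with the definition of $\pd_t^j$ on power series and the standard identity $\sum_{m\geqslant 0}\binom{m+j}{j} s^m = (1-s)^{-(j+1)}$, or equivalently from the chain rule applied to the linear substitution $s = tw$. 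Then I would apply the product rule~\eqref{E:prod} to $t^{n-1}\cdot (1-tw)^{-1}$, using $\pd_t^k(t^{n-1}) = \binom{n-1}{k}t^{n-1-k}$, to obtain
\[
  \pd_t^{n-1}\biggl( \frac{t^{n-1}}{1-tw} \biggr) = \sum_{k=0}^{n-1} \binom{n-1}{k}\, t^{n-1-k}\, \frac{w^{n-1-k}}{(1-tw)^{n-k}}.
\]

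The final step is a binomial collapse: setting $y = tw$ and factoring out $(1-y)^{-1}$, the right-hand side becomes $\frac{1}{1-y}\sum_{k=0}^{n-1}\binom{n-1}{k}\bigl(\tfrac{y}{1-y}\bigr)^{n-1-k} = \frac{1}{1-y}\bigl(1+\tfrac{y}{1-y}\bigr)^{n-1} = (1-y)^{-n}$, which is exactly $(1-tw)^{-n}$; multiplying back by $x$ and substituting $w = e_{\Lambda}(x)$ yields the claim. The main point to watch — more a subtlety than a genuine obstacle — is that all of this must be read in characteristic $p$: the binomial theorem and the series identities above are integer-coefficient formal identities valid over any commutative ring, so they survive reduction mod $p$, but one must resist interpreting $\pd_t^j$ as $\tfrac{1}{j!}\tfrac{d^j}{dt^j}$, which is invalid once $j\geqslant p$. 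I would also remark that since $w=e_{\Lambda}(x)$ does not involve $t$, the operator $\pd_t^j$ commutes with the substitution $w\mapsto e_{\Lambda}(x)$, so the identity in the two parameters $t$ and $w$ descends to the asserted identity of rational functions in $x$.
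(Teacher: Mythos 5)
Your proposal is correct and follows essentially the same route as the paper: the auxiliary formula $\pd_t^{j}\bigl((1-te_\Lambda(x))^{-1}\bigr) = e_\Lambda(x)^{j}(1-te_\Lambda(x))^{-(j+1)}$ (which the paper obtains via the quotient and chain rules rather than the geometric series, an immaterial difference), followed by the product rule and the identical binomial collapse $\frac{1}{1-y}\bigl(1+\frac{y}{1-y}\bigr)^{n-1} = (1-y)^{-n}$. Your added remarks on characteristic $p$ and on treating $e_\Lambda(x)$ as a constant for $\pd_t^j$ are sound and consistent with the paper's conventions.
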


\begin{proof}
Our derivatives with respect to $t$ are taken while considering $x$ to be a constant.  We note that for $\ell \geqslant 0$,
\[
  \pd_t^{\ell} \biggl( \frac{1}{1-t e_{\Lambda}(x) } \biggr) =
  \frac{e_\Lambda(x)^\ell}{(1-te_{\Lambda}(x))^{\ell+1}},
\]
by the quotient and chain rules~\cite[\S 2.2]{Jeong11}.  Therefore, by the product rule,
\begin{align*}
  \pd_t^{n-1} \biggl( \frac{t^{n-1}}{1- t e_{\Lambda}(x)} \biggr)
  &= \sum_{k=0}^{n-1} \pd_t^{k} (t^{n-1}) \pd_t^{n-1-k} \biggl( \frac{1}{1- te_{\Lambda}(x)} \biggr) \\
  &= \sum_{k=0}^{n-1} \binom{n-1}{k} \biggl( \frac{t e_{\Lambda}(x)}{1-t e_{\Lambda}(x)} \biggr)^{n-1-k} \cdot \frac{1}{1- te_{\Lambda}(x)} \\
  &= \biggl( 1 + \frac{t e_{\Lambda}(x)}{1 - t e_{\Lambda}(x)} \biggr)^{n-1} \cdot \frac{1}{1- te_{\Lambda}(x)}.
\end{align*}
A simple calculation yields that this is $1/(1-te_{\Lambda}(x))^n$, and the result follows.
\end{proof}

\begin{proof}[Proof of Theorem~\ref{T:Gosspolydiffs}]
The chain rule~\cite[\S 2.2]{Jeong11} and \eqref{E:diffrt} imply that
\begin{align*}
  \pd_z^{r}(t^n) &= \sum_{k=1}^r \binom{n}{k} t^{n-k}
  \sum_{\substack{\ell_1, \dots, \ell_{k} \geqslant 1 \\ \ell_1 + \cdots + \ell_k = r}}
  \pd_z^{\ell_1}(t) \cdots \pd_z^{\ell_k}(t) \\
  &= (-1)^r \sum_{k=1}^r \binom{n}{k} t^{n-k} \sum_{\substack{\ell_1, \dots, \ell_{k} \geqslant 1 \\ \ell_1 + \cdots + \ell_k = r}} G_{\ell_1 + 1,\Lambda}(t) \cdots G_{\ell_k+1,\Lambda}(t).
\end{align*}
By direct expansion (see~\cite[\S 2.2, Eq.~(I)]{Jeong11}), the final inner sum above is the coefficient of $x^r$ in
\[
  \bigl( G_{2,\Lambda}(t) x + G_{3,\Lambda}(t)x^2 + \cdots \bigr)^k,
\]
and therefore by the binomial theorem,
\[
  \pd_z^r(t^n) = (-1)^r \cdot \textup{$\Bigl($coefficient of $x^r$ in
  $\bigl( t + G_{2,\Lambda}(t) x + G_{3,\Lambda}(t) x^2 + \cdots \bigr)^n\Bigr)$. }
\]
Now $G_{1,\Lambda}(t) = t$, so
\[
  t+ G_{2,\Lambda}(t)x + G_{3,\Lambda}(t)x^2 + \cdots = \sum_{k=1}^\infty G_{k,\Lambda}(t) x^{k-1}
  = \frac{\cG_\Lambda(t,x)}{x} = \frac{t}{1-te_{\Lambda}(x)}.
\]
Therefore,
\[
  \pd_z^r(t^n) = (-1)^r \cdot \textup{$\biggl($coefficient of $x^{r+1}$ in $\dfrac{t^nx}{(1-t e_\Lambda(x))^n}\biggr)$.}
\]
{From} Lemma~\ref{L:genfundiff} we see that
\[
  \frac{t^nx}{(1-t e_\Lambda(x))^n} = t^n \sum_{k=1}^\infty \pd_t^{n-1} \bigl( t^{n-2} G_{k,\Lambda}(t)
  \bigr) x^k,
\]
and so~\eqref{E:tndiff} holds.  To prove~\eqref{E:tndiffbeta}, we first note that it holds when $n=0$ by checking the various cases and using that $\beta_{r,0} = 0$ for $r \geqslant 1$, since $G_{r+1,\Lambda}(t)$ is divisible by $t^2$ for $r \geqslant 1$ by Theorem~\ref{T:Gosspolys}, and that $\beta_{0,0} = 1$.  For $n \geqslant 1$, we use~\eqref{E:tndiff} and write
\[
  \pd_z^r(t^n) = (-1)^r\cdot t^n\, \pd_t^{n-1}\bigl(t^{n-2}G_{r+1,\Lambda}(t) \bigr)
  = (-1)^r\cdot t^n\,\pd_t^{n-1} \biggl( \sum_{j=0}^r \beta_{r,j}\,t^{n+j-1} \biggr).
\]
Noting that
\[
  \pd_t^{n-1} (t^{n+j-1}) = \binom{n+j-1}{n-1} t^j = t^{j-n+1}\,\pd_t^{j}(t^{n+j-1}),
\]
we then have
\[
  \pd_z^r(t^n) = (-1)^r \sum_{j=0}^r \beta_{r,j}\, t^{j+1}\, \pd_t^j(t^{n+j-1}),
\]
and so \eqref{E:tndiffbeta} holds.  Furthermore, by~\eqref{E:diffrSnG} and~\eqref{E:diffrSndual},
\[
  \binom{n+r-1}{r} G_{n+r,\Lambda}(t) = (-1)^{n-1} \cdot \pd_z^{n-1}\bigl( S_{r+1,\Lambda}(z) \bigr)
  = (-1)^{n-1} \cdot \pd_z^{n-1} \bigl( G_{r+1,\Lambda}(t) \bigr).
\]
But then by~\eqref{E:tndiff},
\[
  \pd_z^{n-1} \bigl( G_{r+1,\Lambda}(t) \bigr) = \sum_{j=0}^r \beta_{r,j} \pd_z^{n-1}(t^{j+1})
  = (-1)^{n-1} \sum_{j=0}^r \beta_{r,j} t^{j+1}\, \pd_t^j \bigl( t^{j-1} G_{n,\Lambda}(t) \bigr),
\]
which yields~\eqref{E:Gossdiff}.
\end{proof}

\section{Theta operators on Drinfeld modular forms} \label{S:Theta}

We recall the definition of Drinfeld modular forms for $\GL_2(A)$, which were initially studied by Goss~\cite{Goss80a}, \cite{Goss80b}, \cite{Goss80c}.  We will also review results on $u$-expansions of modular forms due to Gekeler~\cite{Gekeler88}.  Throughout we let $\Gamma = \GL_2(A)$.  A holomorphic function $f : \Omega \to \C$ is a Drinfeld modular form of weight $k \geqslant 0$ and type $m \in \ZZ/(q-1)\ZZ$ if
\begin{enumerate}
\item for all $\gamma  = \left( \begin{smallmatrix} a & b \\ c & d \end{smallmatrix} \right) \in \Gamma$ and all $z \in \Omega$,
\[
  f(\gamma z) = (\det \gamma)^{-m} (cz+d)^k f(z), \quad \gamma z = \frac{az+b}{cz+d};
\]
\item and $f$ is holomorphic at $\infty$, i.e., $f$ has a $u$-expansion and so $f \in \cU(\Omega)$.
\end{enumerate}
We let $M_k^m$ be the $\C$-vector space of modular forms of weight $k$ and type $m$.  We know that $M_k^m \cdot M_{k'}^{m'} \subseteq M_{k+k'}^{m+m'}$ and that $M = \bigoplus_{k,m} M_k^m$ and $M^0 = \bigoplus_k M_k^0$ are graded $\C$-algebras.  Moreover, in order to have $M_k^m \neq 0$, we must have $k \equiv 2m \pmod{q-1}$.  If $L$ is a subring of $\C$, then we let $M_k^m(L)$ denote the space of forms with $u$-expansion coefficients in $L$, i.e., $M_k^m(L) = M_k^m \cap \power{L}{u}$.  We note that if $f = \sum c_n u^n$ is the $u$-expansion of $f \in M_k^m$, then
\begin{equation} \label{E:exponents}
  c_n \neq 0 \quad \Rightarrow \quad n \equiv m \pmod{q-1},
\end{equation}
which can be seen by using $\gamma = \left( \begin{smallmatrix} \zeta & 0 \\ 0 & 1 \end{smallmatrix} \right)$, for $\zeta$ a generator of $\FF_q^{\times}$, in the definition above.

Certain Drinfeld modular forms can be expressed in terms of $A$-expansions, which we now recall.  For $k \geqslant 1$, we set
\begin{equation}\label{E:gk}
  G_k(t) = G_{k,\Lambda_C}(t) = \sum_{j=0}^{k-1} \beta_{k-1,j} t^{j+1},
\end{equation}
to be the Goss polynomials with respect to the lattice $\Lambda_C$.  Since $e_C(z) \in \power{K}{z}$, it follows from Theorem~\ref{T:Gosspolys} that the coefficients $\beta_{k-1,j} \in K$ for all $k$, $j$.  As in~\eqref{E:udef} and Remark~\ref{R:uvst}, we have $u(z) = 1/e_C(\tpi z)$, and for $a \in A$ we set
\begin{equation} \label{E:uadef}
  u_a(z) := u(az) = \frac{1}{e_C(\tpi a z)}.
\end{equation}
Since $e_C(\tpi a z) = C_a(e_C(\tpi z))$, if we take the reciprocal polynomial for $C_a(z)$ to be $R_a(z) = z^{q^{\deg a}} C_a( 1/z)$
then
\begin{equation} \label{E:uasum}
  u_a = \frac{u^{q^{\deg a}}}{R_a(u)} = u^{q^{\deg a}} + \cdots \in \power{A}{u}.
\end{equation}
We say that a modular form $f$ has an $A$-expansion if there exist $k \geqslant 1$ and $c_0$, $c_a \in \C$ for $a \in A_+$, so that
\[
  f = c_0 + \sum_{a \in A_+} c_a G_k(u_a).
\]

\begin{exam}
For $k \equiv 0 \pmod{q-1}$, $k > 0$, the primary examples of Drinfeld modular forms with $A$-expansions come from Eisenstein series,
\[
  E_k(z) = \frac{1}{\tpi^k} \sumprime_{a,b \in A} \frac{1}{(az+b)^k},
\]
which is a modular form of weight $k$ and type $0$.  Gekeler~\cite[(6.3)]{Gekeler88} showed that
\begin{equation} \label{E:Eisexp}
  E_k = \frac{1}{\tpi^k} \sumprime_{b \in A} \frac{1}{b^k} - \sum_{a \in A_+} G_k(u_a)
  = -\frac{\zeta_C(k)}{\tpi^k} - \sum_{a \in A_+} G_k(u_a),
\end{equation}
where $\zeta_C(k) = \sum_{a \in A_+} a^{-k}$ is a Carlitz zeta value.  We know (see~\cite[\S 9.2]{Goss}) that $\zeta_C(k)/\tpi^k \in K$.
\end{exam}

For more information and examples on $A$-expansions the reader is directed to Gekeler \cite{Gekeler88}, L\'{o}pez \cite{Lopez10}, \cite{Lopez11}, and Petrov~\cite{Petrov13}, \cite{Petrov15}.

\begin{exam} \label{Ex:falseE}
We can also define the false Eisenstein series $E(z)$ of Gekeler~\cite[\S 8]{Gekeler88} to be
\[
  E(z) := \frac{1}{\tpi} \sum_{a \in A_+} \sum_{b \in A} \frac{a}{az+b},
\]
which is not quite a modular form but is a quasi-modular form similar to the classical weight~$2$ Eisenstein series~\cite{BosserPellarin08}, \cite{Gekeler88}.  Gekeler showed that $E \in \cU(\Omega)$ and that $E$ has an $A$-expansion,
\begin{equation} \label{E:falseexp}
  E = \sum_{a \in A_+} a G_{1}(u_a) = \sum_{a \in A_+} a u_a.
\end{equation}
\end{exam}

We now define theta operators $\Theta^r$ on functions in $\cH(\Omega)$ by setting for $r \geqslant 0$,
\begin{equation} \label{E:Thetadef}
  \Theta^r := \frac{1}{(-\tpi)^r}\, \pd_z^r.
\end{equation}
If we take $\Theta = \Theta^1$, then by~\eqref{E:uder1z}, $\Theta u = u^2$,
and $\Theta$ plays the role of the classical theta operator $\vartheta = \bq \frac{d}{d\bq}$.  Just as in the classical case, $\Theta$ and more generally $\Theta^r$ do not take modular forms to modular forms.  However, Bosser and Pellarin~\cite[Thm.~2]{BosserPellarin08} prove that $\Theta^r$ preserves quasi-modularity:
\[
  \Theta^r : \C[E,g,h] \to \C[E,g,h],
\]
where $E$ is the false Eisenstein series, $g = E_{q-1}$, and $h$ is the cusp form of weight $q+1$ and type $1$ defined by Gekeler~\cite[Thm.~5.13]{Gekeler88} as the $(q-1)$-st root of the discriminant function $\Delta$.  To prove their theorem, Bosser and Pellarin~\cite[Lem.~3.5]{BosserPellarin08} give formulas for $\Theta^r(u^n)$, which are ostensibly a bit complicated. From Theorem~\ref{T:Gosspolydiffs}, we have the following corollary, which perhaps conceptually simplifies matters.

\begin{cor} \label{C:Thetau}
For $r \geqslant 0$,
\begin{align}
\label{E:Thetaun}
\tag{\ref{C:Thetau}a}
  \Theta^r(u^n) &= u^n\, \pd_u^{n-1} \bigl( u^{n-2} G_{r+1}(u) \bigr), \quad \forall\,n \geqslant 1, \\
\label{E:Thetaunbeta}
\tag{\ref{C:Thetau}b}
  \Theta^r(u^n) &= \sum_{j=0}^r \beta_{r,j} u^{j+1} \pd_u^j (u^{n+j-1})
  = \sum_{j=0}^r \binom{n+j-1}{j} \beta_{r,j} u^{n+j}, \quad \forall\, n \geqslant 0,
\end{align}
where $\beta_{r,j}$ are the coefficients of $G_{r+1}(t)$ in~\eqref{E:gk}.
\end{cor}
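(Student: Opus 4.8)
The plan is to deduce both formulas directly from Theorem~\ref{T:Gosspolydiffs} applied to the Carlitz lattice $\Lambda = \Lambda_C$, using only the normalization recorded in Remark~\ref{R:uvst} together with the scaling behavior of hyperderivatives under a linear substitution. By Remark~\ref{R:uvst} we have $u(z) = t_{\Lambda_C}(\tpi z)$, and by definition \eqref{E:gk} the Goss polynomial $G_{r+1}(u) = G_{r+1,\Lambda_C}(u)$ has exactly the same coefficients $\beta_{r,j}$ as appear in Theorem~\ref{T:Gosspolydiffs} for $\Lambda = \Lambda_C$. Thus the content of the corollary is the translation of \eqref{E:tndiff} and \eqref{E:tndiffbeta} through the change of variable $w = \tpi z$ and the normalization $\Theta^r = \frac{1}{(-\tpi)^r}\pd_z^r$ from \eqref{E:Thetadef}.

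First I would record the substitution rule. Setting $w := \tpi z$, the identity $\pd_z^j\bigl((\tpi z)^n\bigr) = \tpi^j \binom{n}{j} (\tpi z)^{n-j}$ together with $\C$-linearity gives $\pd_z^r\bigl(F(\tpi z)\bigr) = \tpi^r (\pd_w^r F)(\tpi z)$ for any $F$. By the Uchino--Satoh results recalled in \S\ref{S:functions}, holomorphic hyperderivatives obey the same rules as formal ones, so the purely formal computation in Theorem~\ref{T:Gosspolydiffs} may legitimately be applied to the holomorphic function $u$.

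Next I would apply Theorem~\ref{T:Gosspolydiffs} with $\Lambda = \Lambda_C$, taking its variable $z$ to be our $w$, so that its $t = t_{\Lambda_C}(w) = u$ and its $\pd_t = \pd_u$. Then \eqref{E:tndiff} reads $\pd_w^r(u^n) = (-1)^r u^n\, \pd_u^{n-1}\bigl(u^{n-2}G_{r+1}(u)\bigr)$ for $n \geqslant 1$, and combining with the substitution rule gives $\pd_z^r(u^n) = \tpi^r(-1)^r u^n\, \pd_u^{n-1}\bigl(u^{n-2}G_{r+1}(u)\bigr) = (-\tpi)^r u^n\, \pd_u^{n-1}\bigl(u^{n-2}G_{r+1}(u)\bigr)$; dividing by $(-\tpi)^r$ yields \eqref{E:Thetaun}. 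The same argument applied to \eqref{E:tndiffbeta} (valid for $n \geqslant 0$) gives the first equality of \eqref{E:Thetaunbeta}, and the second equality follows from $\pd_u^j(u^{n+j-1}) = \binom{n+j-1}{j}u^{n-1}$, so that $u^{j+1}\,\pd_u^j(u^{n+j-1}) = \binom{n+j-1}{j}u^{n+j}$.

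There is no serious obstacle here, since the corollary is a specialization of Theorem~\ref{T:Gosspolydiffs}; the only point demanding care is the bookkeeping of the constant $\tpi$. One must track that the factor $\tpi^r$ produced by the substitution $w = \tpi z$ combines with the $(-1)^r$ in Theorem~\ref{T:Gosspolydiffs} to produce precisely $(-\tpi)^r$, which then cancels against the $1/(-\tpi)^r$ built into the definition of $\Theta^r$; it is exactly this matching of signs that makes the formulas clean. One should also verify the boundary case $n = 0$ in \eqref{E:Thetaunbeta} separately: there $\binom{j-1}{j}$ forces all terms with $j \geqslant 1$ to vanish, leaving $\beta_{r,0} = \Theta^r(1)$, consistent with $\beta_{r,0} = 0$ for $r \geqslant 1$ and $\beta_{0,0} = 1$ as established in the proof of Theorem~\ref{T:Gosspolydiffs}.
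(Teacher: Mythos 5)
Your proposal is correct and follows essentially the same route as the paper: both deduce the corollary from Theorem~\ref{T:Gosspolydiffs} with $\Lambda = \Lambda_C$ via the substitution $u(z) = t_{\Lambda_C}(\tpi z)$ from Remark~\ref{R:uvst}, tracking how the factor $\tpi^r$ from the rescaling combines with the $(-1)^r$ to cancel the $1/(-\tpi)^r$ in the definition of $\Theta^r$. Your explicit check of the $n=0$ boundary case of \eqref{E:Thetaunbeta} is a nice touch that the paper handles implicitly by deferring to the proof of \eqref{E:tndiffbeta}.
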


\begin{proof}
The proof of \eqref{E:Thetaun} is straightforward, but it is worth noting how the different normalizations of $u(z)$ and $t_{\Lambda_C}(z)$ work out.  From Remark~\ref{R:uvst}, we see that
\begin{multline*}
  \Theta^r (u^n) = \biggl(\frac{-1}{\tpi} \biggr)^r \pd_z^r\bigl( t_{\Lambda_C}(\tpi z)^n \bigr)
  = \biggl(\frac{-1}{\tpi} \biggr)^r \cdot \tpi^r\, \pd_z^r \bigl(t_{\Lambda_C}^n) \big|_{z=\tpi z} \\
  = t^n\, \pd_t^{n-1}\bigl( t^{n-2} G_{r+1}(t) \bigr)\big|_{t=t_{\Lambda_C}(\tpi z)}
  = u^n\, \pd_u^{n-1} \bigl( u^{n-2} G_{r+1}(u) \bigr),
\end{multline*}
where the third equality is~\eqref{E:tndiff}.  The proof of~\eqref{E:Thetaunbeta} is then the same as for~\eqref{E:tndiffbeta}.
\end{proof}

\begin{rem}
We see from~\eqref{E:Thetaun} that there is a duality of some fashion between the $r$-th derivative of $u^n$ and the $(n-1)$-st derivative of $G_{r+1}(u)$, which dovetails with~\eqref{E:diffrSndual}.
\end{rem}

We see from this corollary that $\Theta^r$ can be seen as the operator on power series in $\power{\C}{u}$ given by the following result.  Moreover, from~\eqref{E:Thetafbeta}, we see that computation of $\Theta^r(f)$ is reasonably straightforward once the computation of the coefficients of $G_{r+1}(t)$ can be made.

\begin{cor} \label{C:Thetaf}
Let $f = \sum c_n u^n \in \cU(\Omega)$.  For $r \geqslant 0$,
\begin{align}
\label{E:Thetaf}
\tag{\ref{C:Thetaf}a}
  \Theta^r(f) &= \Theta^r(c_0) +  \sum_{n=1}^\infty c_n u^n\, \pd_u^{n-1} \bigl( u^{n-2} G_{r+1}(u) \bigr), \\
\label{E:Thetafbeta}
\tag{\ref{C:Thetaf}b}
  \Theta^r(f) &= \sum_{j=0}^r \beta_{r,j}\, u^{j+1}\, \pd_u^{j}\bigl( u^{j-1} f \bigr),
\end{align}
where $\beta_{r,j}$ are the coefficients of $G_{r+1}(t)$ in~\eqref{E:gk}.
\end{cor}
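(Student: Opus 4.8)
The plan is to deduce both identities directly from Corollary~\ref{C:Thetau} together with the observation, recorded just before the statement, that $\Theta^r = (-\tpi)^{-r}\pd_z^r$ may be regarded as a $\C$-linear operator on $\power{\C}{u}$ acting term-by-term on $u$-expansions. The justification is the Uchino--Satoh lemma, which guarantees that $\pd_z^r$ sends $\cU(\Omega)$ into $\cU(\Omega)$ and that it inherits the differentiation rules for power series; consequently the $u$-expansion of $\Theta^r(f)$ is obtained by applying $\Theta^r$ to the $u$-expansion of $f$ and summing, the sum converging $u$-adically. Granting this, for $f = \sum_{n=0}^\infty c_n u^n$ I would write $\Theta^r(f) = \sum_{n=0}^\infty c_n\,\Theta^r(u^n)$, separate off the constant term as $\Theta^r(c_0)$, and substitute \eqref{E:Thetaun} into each term with $n \geqslant 1$. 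This produces \eqref{E:Thetaf} at once.

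For \eqref{E:Thetafbeta} I would instead insert the first equality of \eqref{E:Thetaunbeta} into $\Theta^r(f) = \sum_n c_n\,\Theta^r(u^n)$, giving $\sum_n c_n \sum_{j=0}^r \beta_{r,j}\, u^{j+1}\,\pd_u^j(u^{n+j-1})$, and then interchange the two summations. Because the index $j$ ranges over the finite set $0 \leqslant j \leqslant r$, I may move the $j$-sum outside, absorb $c_n$ into $\pd_u^j$ using $\C$-linearity, and recognize $\sum_n c_n u^{n+j-1} = u^{j-1} f$. This yields $\Theta^r(f) = \sum_{j=0}^r \beta_{r,j}\, u^{j+1}\,\pd_u^j(u^{j-1} f)$, which is \eqref{E:Thetafbeta}.

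Two bookkeeping points deserve care. First, \eqref{E:Thetaun} is valid only for $n \geqslant 1$, which is precisely why the constant term is displayed separately as $\Theta^r(c_0)$ in \eqref{E:Thetaf}; here $\Theta^r(c_0) = c_0$ when $r = 0$ and vanishes otherwise. Second, the $j = 0$ summand of \eqref{E:Thetafbeta} involves $u^{-1} f$, a genuine element of $\laurent{\C}{u}$ with a simple pole. This causes no difficulty, since the hyperderivatives extend to $\laurent{\C}{u}$ as recalled in \S\ref{S:functions}, and in any case $\beta_{r,0} = 0$ for every $r \geqslant 1$ because $G_{r+1}(t)$ is divisible by $t^2$; thus the negative power of $u$ is felt only in the trivial case $r = 0$, where the $j = 0$ term correctly returns $f = \Theta^0(f)$.

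The only genuinely substantive point, beyond the formal manipulations above, is the legitimacy of applying $\Theta^r$ term-by-term to the convergent $u$-expansion of $f$ and of commuting $\pd_u^j$ past the infinite sum after the interchange. Both are consequences of the linearity and $u$-adic continuity of the holomorphic hyperderivatives established by Uchino and Satoh and summarized in \S\ref{S:functions}; once these are in hand, everything else is the elementary rearrangement sketched here.
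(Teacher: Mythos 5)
Your proposal is correct and follows the same route the paper intends: the paper states Corollary~\ref{C:Thetaf} without a separate proof, treating it as the immediate term-by-term consequence of Corollary~\ref{C:Thetau} that you spell out, with the Uchino--Satoh lemma justifying the passage to $u$-expansions. Your two bookkeeping remarks (the separate constant term $\Theta^r(c_0)$ and the harmlessness of the $j=0$ term via $\beta_{r,0}=0$ for $r\geqslant 1$) are accurate and consistent with the paper's conventions.
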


Finally we recall the definition of the $r$-th Serre operator $\cD^r$ on modular forms in $M_k^m$ for $r \geqslant 0$.  We set
\begin{equation} \label{E:SerreOp}
  \cD^r(f) := \Theta^r(f)  + \sum_{i=1}^r (-1)^i \binom{k + r-1}{i} \Theta^{r-i}(f)\Theta^{i-1}(E).
\end{equation}
The following result shows that $\cD^r$ takes modular forms to modular forms.

\begin{thm}[{Bosser-Pellarin \cite[Thm.~4.1]{BosserPellarin09}}] \label{T:SerreOp}
For any weight $k$, type $m$, and $r \geqslant 0$,
\[
  \cD^r \bigl( M_k^m \bigr) \subseteq M_{k+2r}^{m+r}.
\]
\end{thm}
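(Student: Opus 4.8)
The plan is to show that $\cD^r(f)$ is invariant under the weight-$(k+2r)$, type-$(m+r)$ action of $\Gamma$, since holomorphy at $\infty$ is automatic: $\Theta^r$ preserves $\cU(\Omega)$ by the Uchino--Satoh lemma, $E\in\cU(\Omega)$, and $\cU(\Omega)$ is closed under products, so every summand of $\cD^r(f)$ lies in $\cU(\Omega)$. For $\gamma=\left(\begin{smallmatrix}a&b\\c&d\end{smallmatrix}\right)\in\Gamma$ and a function $g$ on $\Omega$, write $(g\|_{\ell,n}\gamma)(z):=(\det\gamma)^{n}(cz+d)^{-\ell}g(\gamma z)$, so that $g\in M_\ell^n$ is equivalent (with holomorphy) to $g\|_{\ell,n}\gamma=g$ for all $\gamma$, and set $\omega=\omega(\gamma,z):=c/(\tpi(cz+d))$. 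Because weights and types add under the slash, and each product $\Theta^{r-i}(f)\,\Theta^{i-1}(E)$ has weight $k+2r$ and type $m+r$, it suffices to slash each factor separately and check that all $\omega$-dependent terms cancel.

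First I would establish the transformation law for $\Theta^r$ on a form $f\in M_k^m$, namely
\[
  (\Theta^r f)\|_{k+2r,\,m+r}\gamma=\sum_{s=0}^{r}(-1)^{r-s}\binom{k+r-1}{r-s}\,\omega^{\,r-s}\,\Theta^s(f).
\]
To prove this I would apply $\pd_z^r$ to the identity $f(\gamma z)=(\det\gamma)^{-m}(cz+d)^k f(z)$. The right side is handled by the product rule \eqref{E:prod} together with $\pd_z^i((cz+d)^k)=\binom{k}{i}c^i(cz+d)^{k-i}$, and the left side by the hyperderivative chain rule used in the proof of Theorem~\ref{T:Gosspolydiffs}, using the explicit values $\pd_z^{\ell}(\gamma z)=(-1)^{\ell+1}(\det\gamma)\,c^{\ell-1}(cz+d)^{-\ell-1}$ for $\ell\geqslant 1$, which follow from writing $\gamma z=a/c-(\det\gamma)/\bigl(c(cz+d)\bigr)$ when $c\neq 0$ (the case $c=0$ being immediate). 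The resulting equations form a triangular system whose top coefficient $(\det\gamma)^r(cz+d)^{-2r}$ is invertible, so one solves for $(\pd_w^r f)(\gamma z)$ and proceeds by induction on $r$. The verification that the coefficients are exactly $(-1)^{r-s}\binom{k+r-1}{r-s}$ is most transparent after multiplying by $x^r$ and summing: using $\sum_{j\geqslant 0}\binom{N+j-1}{j}y^j=(1-y)^{-N}$, the formula collapses to the statement that $\sum_r (\Theta^r f)\|\gamma\,x^r=(1+\omega x)^{-k}\sum_s\Theta^s(f)\,(x/(1+\omega x))^s$, i.e. to the Möbius substitution $x\mapsto x/(1+\omega x)$.

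Next I would establish the analogous law for the quasi-modular series $E$. Gekeler's transformation of the false Eisenstein series~\cite{Gekeler88} can be written as $E\|_{2,1}\gamma=E-\omega$, that is, $E(\gamma z)=(\det\gamma)^{-1}\bigl((cz+d)^2E(z)-c(cz+d)/\tpi\bigr)$. Applying $\pd_z^{\,i-1}$ to this functional equation exactly as above, but now carrying the inhomogeneous term $-(\det\gamma)^{-1}c(cz+d)/\tpi$ (which is linear in $z$, hence killed by $\pd_z^j$ for $j\geqslant 2$), yields a formula for $(\Theta^{i-1}E)\|_{2i,i}\gamma$ consisting of a modular-type part $\sum_{s}(-1)^{i-1-s}\binom{i}{\,i-1-s\,}\omega^{\,i-1-s}\Theta^s(E)$ together with finitely many explicit corrections that are pure powers of $\omega$. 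Substituting both transformation laws into $\cD^r(f)$, expanding the products, and collecting terms, the claim reduces to showing that the coefficient of each positive power of $\omega$ vanishes; the coefficients $(-1)^i\binom{k+r-1}{i}$ in the definition of $\cD^r$ are precisely what is needed. The mechanism is already visible for $r=1$, where $\cD^1(f)=\Theta(f)-kfE$ and the corrections cancel as $(\Theta f-k\omega f)-k(fE-f\omega)=\Theta f-kfE$.

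The main obstacle is this third step: tracking exactly the $\omega$-corrections produced by hyperdifferentiating the non-modular term in $E$'s transformation, and then proving that the entire collection of $\omega$-terms cancels across the sum defining $\cD^r(f)$. The combinatorics are controlled but not automatic, and the cleanest route is to phrase both transformation laws as generating-series identities under the substitution $x\mapsto x/(1+\omega x)$; the cancellation then becomes the single assertion that the generating series attached to $\cD^r(f)$ is invariant under that substitution, hence independent of $\omega$, which is exactly the statement that $\cD^r(f)$ is genuinely modular of weight $k+2r$ and type $m+r$.
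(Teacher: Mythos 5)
First, a point of comparison: the paper does not prove this statement at all --- it is quoted directly from Bosser--Pellarin \cite[Thm.~4.1]{BosserPellarin09} --- so there is no internal proof to measure you against. Your strategy is essentially theirs (the transformation theory of quasi-modular forms), and the ingredients you assemble are correct: holomorphy at $\infty$ of every summand does follow from the Uchino--Satoh lemma and closure of $\cU(\Omega)$ under products; the law $(\Theta^r f)\|_{k+2r,m+r}\gamma=\sum_{s=0}^{r}(-1)^{r-s}\binom{k+r-1}{r-s}\omega^{r-s}\Theta^s(f)$ is the right function-field analogue of the classical formula, and your generating-series reformulation checks out (the coefficient of $x^r$ in $x^s(1+\omega x)^{-k-s}$ is exactly $(-1)^{r-s}\binom{k+r-1}{r-s}\omega^{r-s}$); Gekeler's law $E\|_{2,1}\gamma=E-\omega$ is correctly recalled, as is the $r=1$ cancellation for $\cD^1(f)=\Theta(f)-kEf$.

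The genuine gap is the one you flag yourself: the vanishing of every positive power of $\omega$ in $\cD^r(f)\|_{k+2r,m+r}\gamma$ is asserted, not proved, and that vanishing \emph{is} the theorem --- it is precisely what the coefficients $(-1)^i\binom{k+r-1}{i}$ are designed to produce. Two things are concretely missing. First, you never pin down the inhomogeneous corrections in the transformation of $\Theta^{i-1}(E)$: calling them ``finitely many explicit corrections that are pure powers of $\omega$'' is not enough, because the cancellation depends on their exact constants. For instance $(\Theta E)\|_{4,2}\gamma=\Theta E-2\omega E+\omega^2$, so the top-depth coefficient of $\Theta^{i-1}(E)$ is a specific constant $c_{i-1}$ that must be computed in closed form; moreover, in characteristic $p$ one must verify these constants carry no forbidden denominators (the naive classical analogue of the top correction involves a factor $1/i$, which would be fatal when $p\mid i$, so the hyperderivative normalization has to be checked to avoid it). Second, once both transformation laws are in hand, the claim reduces to a two-parameter binomial identity mixing $\binom{k+r-1}{i}$, the coefficients $\binom{k+r-i-1}{\cdot}$ from slashing $\Theta^{r-i}(f)$, the coefficients $\binom{i}{\cdot}$ from slashing $\Theta^{i-1}(E)$, and the constants $c_{i-1}$ --- all to be verified modulo $p$, where Lucas-type obstructions can and do derail ``obvious'' binomial cancellations. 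Your $r=1$ check and the remark that the generating series of $\cD^r(f)$ should be invariant under $x\mapsto x/(1+\omega x)$ describe what a proof would look like, but that invariance is never actually computed. As it stands the argument is a sound plan with correctly stated lemmas rather than a complete proof; to close it, either carry out the generating-series computation in full or fall back on the structure theory of quasi-modular forms of \cite{BosserPellarin08}, \cite{BosserPellarin09}, where exactly this bookkeeping is done.
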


\section{$v$-adic modular forms} \label{S:vadicforms}

In this section we review the theory of $v$-adic modular forms introduced by Goss~\cite{Goss14} and Vincent~\cite{Vincent14}.  In~\cite{Serre73}, Serre defined $p$-adic modular forms as $p$-adic limits of Fourier series of classical modular forms and determined their properties, in particular their behavior under the $\vartheta$-operator.  For a fixed finite place $v$ of $K$, Goss and Vincent recently transferred Serre's definition to the function field setting of $v$-adic modular forms, and Goss produced families of examples based on work of Petrov~\cite{Petrov13} (see Theorem~\ref{T:PetrovGoss}).  In \S\ref{S:Thetavadic}, we show that $v$-adic modular forms are invariant under the operators $\Theta^r$.

For our place $v$ of $K$ we fix $\wp \in A_+$, which is the monic irreducible generator of the ideal $\fp_v$ associated to $v$, and we let $d := \deg(\wp)$.  As before we let $A_v$ and $K_v$ denote completions with respect to $v$.

We will write $K \otimes \power{A_v}{u}$ for $K \otimes_A \power{A_v}{u}$, and we recall that $K \otimes\power{A_v}{u}$ can be identified with elements of $\power{K_v}{u}$ that have bounded denominators.  For $f = \sum_{n=0}^\infty c_n u^n \in K \otimes \power{A_v}{u}$, we set
\begin{equation}
  \ord_v(f) := \inf_n \{\ord_v(c_n)\} = \min_n \{ \ord_v(c_n) \}.
\end{equation}
If $\ord_v(f) \geqslant 0$, i.e.,\ if $f\in \power{A_v}{u}$, then we say $f$ is $v$-integral.  For $f$, $g \in K \otimes \power{A_v}{u}$, we write that
\[
  f \equiv g \pmod{\wp^m},
\]
if $\ord_v(f-g) \geqslant m$.  We also define a topology on $K \otimes\power{A_v}{u}$ in terms of the $v$-adic norm,
\begin{equation}
\vnorm{f} := q^{-\ord_v(f)},
\end{equation}
which is a multiplicative norm by Gauss' lemma.

Following Goss, we define the $v$-adic weight space $\bbS = \bbS_v$ by
\begin{equation}
\bbS := \varprojlim_\ell \ZZ/(q^d-1)p^\ell \ZZ = \ZZ/(q^d-1)\ZZ \times \ZZ_p.
\end{equation}
We have a canonical embedding of $\ZZ \hookrightarrow \bbS$, by identifying $n \in \ZZ$ with $(\on,n)$, where $\on$ is the class of $n$ modulo $q^d-1$.  For any $a\in A_+$ with $\wp \nmid a$, we can decompose $a$ as $a = a_1 a_2$, where $a_1 \in A_v^{\times}$ is the $(q^d-1)$-st root of unity satisfying $a_1 \equiv a \pmod{v}$ and $a_2 \in A_v^{\times}$ satisfies $a_2 \equiv 1 \pmod{v}$. Then for any $s = (x,y) \in \bbS$, we define
\begin{equation} \label{E:atos}
a^s := a_1^x a_2^y.
\end{equation}
This definition of $a^s$ is compatible with the usual definition when $s$ is an integer. Furthermore, it is easy to check that the function $s \mapsto a^s$ is continuous from $\bbS$ to $A_v^{\times}$.

\begin{defn}[{Goss~\cite[Def.~5]{Goss14}}] \label{Def:vadic}
We say a power series $f \in K \otimes \power{A_v}{u}$ is a $v$-adic modular form of weight $s \in \bbS$, in the sense of Serre, if there exists a sequence of $K$-rational modular forms $f_i \in M_{k_i}^{m_i}(K)$ so that as $i \to \infty$,
\begin{enumerate}
\item[(a)] $\vnorm{f_i - f} \to 0$,
\item[(b)] $k_i \to s$ in $\bbS$.
\end{enumerate}
Moreover, if $f \neq 0$, then $m_i$ is eventually a constant $m \in \ZZ/(q-1)\ZZ$, and we say that $m$ is the type of~$f$.  We say that $f_i$ converges to $f$ as $v$-adic modular forms.
\end{defn}

It is easy to see that the sum and difference of two $v$-adic modular forms, both with weight $s$ and type $m$, are also $v$-adic modular forms with the same weight and type.  We set
\begin{equation}
  \cM_{s}^m = \bigl\{ f \in K \otimes \power{A_v}{u} \bigm| \textup{$f$ a $v$-adic modular form of weight $s$ and type $m$} \bigr\},
\end{equation}
which is a $K_v$-vector space, and we note that
\[
  \cM_{s_1}^{m_1} \cdot \cM_{s_2}^{m_2} \subseteq \cM_{s_1+s_2}^{m_1+m_2}.
\]
We take $\cM_{s}^m(A_v) := \cM_s^m \cap \power{A_v}{u}$, which is an $A_v$-module.
Moreover, any Drinfeld modular form in $M_k^m(K)$ is also a $v$-adic modular form as the limit of the constant sequence ($u$-expansion coefficients of forms in $M_k^m(K)$ have bounded denominators by~\cite[Thm.~5.13,~\S 12]{Gekeler88}, \cite[Thm.~2.23]{Goss80b}), and so for $k \in \ZZ$, $k \geqslant 0$,
\[
  M_k^m(K) \subseteq \cM_k^m, \quad M_k^m(A) \subseteq \cM_k^m(A_v).
\]
The justification of the final part of Definition~\ref{Def:vadic} is the following lemma.

\begin{lem}
Suppose that $f_i \in M_{k_i}^{m_i}(K)$ converge to a non-zero $v$-adic modular form~$f$.  Then there is some $m \in \ZZ/(q-1)\ZZ$ so that except for finitely terms $m_i = m$.
\end{lem}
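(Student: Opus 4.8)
The plan is to exploit the restriction on the support of the $u$-expansion of a form of type $m$ recorded in~\eqref{E:exponents}, together with the fact that $v$-adic convergence is controlled coefficient-by-coefficient. Write $f = \sum_{n=0}^\infty c_n u^n$ and $f_i = \sum_{n=0}^\infty c_{i,n} u^n$. Since $f \neq 0$, I fix an index $N$ with $c_N \neq 0$ and set $m := \overline{N}$, the residue of $N$ modulo $q-1$; this $m$ will turn out to be the eventual common type.

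The key observation is that, by the definition of $\ord_v$ on $K \otimes \power{A_v}{u}$, we have $\ord_v(f_i - f) = \inf_n \ord_v(c_{i,n} - c_n)$, so the hypothesis $\vnorm{f_i - f} \to 0$ is equivalent to $\ord_v(f_i - f) \to \infty$. In particular $\ord_v(c_{i,N} - c_N) \to \infty$ for our fixed index $N$. Thus once $i$ is large enough that $\ord_v(c_{i,N} - c_N) > \ord_v(c_N)$, the ultrametric inequality forces $\ord_v(c_{i,N}) = \ord_v(c_N) < \infty$, and hence $c_{i,N} \neq 0$.

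Finally, since $f_i \in M_{k_i}^{m_i}(K)$ has type $m_i$, the implication~\eqref{E:exponents} applied to $f_i$ shows that $c_{i,N} \neq 0$ forces $N \equiv m_i \pmod{q-1}$, that is, $m_i = m$. Therefore $m_i = m$ for all sufficiently large $i$, which is exactly the asserted conclusion. There is no serious obstacle here; the only point that requires care is recognizing that convergence in the norm $\vnorm{\cdot}$ descends to each individual $u$-expansion coefficient, after which the ultrametric inequality promotes a single nonzero coefficient of the limit $f$ to a nonzero coefficient of $f_i$ for large $i$, and~\eqref{E:exponents} then pins down the type.
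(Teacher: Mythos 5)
Your proof is correct and follows essentially the same route as the paper's: fix an index $N$ with $c_N \neq 0$, use $v$-adic convergence (coefficientwise, via the ultrametric inequality) to force $c_{i,N} \neq 0$ for large $i$, and then invoke~\eqref{E:exponents} to conclude $m_i \equiv N \pmod{q-1}$. The paper phrases this more tersely through the Cauchy property $\vnorm{f_i - f_j} \to 0$, but the mechanism is identical.
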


\begin{proof}
Since $\vnorm{f-f_i} \to 0$, it follows that $\vnorm{f_i - f_j} \to 0$ as $i$, $j \to \infty$.  If $f = \sum c_n u^n$ and $c_n \neq 0$, then from~\eqref{E:exponents} we see that for $i$, $j \gg 0$, $n \equiv m_i \equiv m_j \pmod{q-1}$.
\end{proof}

\begin{prop}\label{P:limit}
Suppose $\{f_i\}$ is a sequence of $v$-adic modular forms with weights $s_i$.  Suppose that we have $f_0 \in K \otimes \power{A_v}{u}$ and $s_0 \in \bbS$ satisfying,
\begin{enumerate}
\item[(a)] $\vnorm{f_i - f_0} \to 0$,
\item[(b)] $s_i \to s_0$ in $\bbS$.
\end{enumerate}
Then $f_0$ is a $v$-adic modular form of weight $s_0$.  The type of $f_0$ is the eventual constant type of the sequence $\{ f_i \}$.
\end{prop}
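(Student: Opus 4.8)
The plan is to realize $f_0$ directly as a $v$-adic limit of $K$-rational Drinfeld modular forms with integer weights tending to $s_0$, by diagonalizing over the sequences that witness each $f_i$. Recall from the definition $\bbS = \varprojlim_\ell \ZZ/(q^d-1)p^\ell\ZZ$ that a sequence of integers $k$ converges to $s \in \bbS$ precisely when, for every level $\ell$, one eventually has $k \equiv s$ in $\ZZ/(q^d-1)p^\ell\ZZ$. By Definition~\ref{Def:vadic}, each $f_i$ is the $v$-adic limit of a sequence $g_{i,j} \in M_{k_{i,j}}^{m_{i,j}}(K)$ with $\vnorm{g_{i,j} - f_i} \to 0$ and $k_{i,j} \to s_i$ as $j \to \infty$. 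For each $i$ I would choose an index $j(i)$ large enough that the single form $g_i := g_{i,j(i)}$, of integer weight $k_i := k_{i,j(i)}$ and type $m_i := m_{i,j(i)}$, satisfies both $\vnorm{g_i - f_i} \leqslant q^{-i}$ and $k_i \equiv s_i$ through level $i$ of the tower.

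Next I would verify that $\{g_i\}$ witnesses $f_0$ as a $v$-adic modular form of weight $s_0$. Since $\vnorm{\cdot}$ is ultrametric, hypothesis~(a) gives
\[
  \vnorm{g_i - f_0} \leqslant \max\bigl\{ \vnorm{g_i - f_i},\ \vnorm{f_i - f_0} \bigr\} \longrightarrow 0.
\]
For the weights, fix a level $\ell$. Hypothesis~(b) gives $s_i \equiv s_0$ through level $\ell$ for all $i \gg 0$, while $k_i \equiv s_i$ through level $i$, hence in particular through level $\ell$ once $i \geqslant \ell$; combining these, $k_i \equiv s_0$ through level $\ell$ for all $i \gg 0$. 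As $\ell$ was arbitrary, $k_i \to s_0$ in $\bbS$. Thus $\{g_i\}$ satisfies conditions~(a) and~(b) of Definition~\ref{Def:vadic} relative to $f_0$ and $s_0$, so $f_0 \in \cM_{s_0}^m$ for the appropriate type~$m$.

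It remains to identify the type. Assuming $f_0 \neq 0$, choose $N$ with the $u^N$-coefficient $c_N$ of $f_0$ nonzero. Coefficientwise $v$-adic convergence shows that the $u^N$-coefficients of $f_i$ and of $g_i$ are nonzero for all $i \gg 0$. Each $f_i$ of type $m_i'$ has, by passing~\eqref{E:exponents} to the $v$-adic limit of its own defining sequence, nonzero $u$-coefficients only in degrees $\equiv m_i' \pmod{q-1}$; hence $m_i' \equiv N \pmod{q-1}$ eventually, so the types of the $f_i$ are eventually the constant $m := \overline{N}$. The same argument applied to $\{g_i\}$, together with the preceding lemma, shows the type of $f_0$ is likewise $\overline{N}$, which is therefore the eventual constant type of $\{f_i\}$.

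There is no deep obstacle here: the crux is organizing the diagonal extraction so that the norm estimate and the weight congruences hold simultaneously, which the ultrametric inequality and the level-by-level congruence estimate in the tower defining $\bbS$ make routine. The one point requiring care is the bookkeeping of types, ensuring that the type read off from the diagonal sequence $\{g_i\}$ agrees with the eventual type of the original $\{f_i\}$; this follows because both inherit the nonzero coefficient $c_N$ of $f_0$ in the limit.
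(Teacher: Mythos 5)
Your proposal is correct and follows essentially the same route as the paper, which also proves this by a diagonal extraction from the double sequence $\{g_{i,j}\}$ and appeals to ``standard arguments'' for the norm and weight convergence. You have simply written out those standard arguments (the careful choice of $j(i)$, the ultrametric estimate, the level-by-level congruence in $\bbS$, and the bookkeeping of types via~\eqref{E:exponents}) in full detail.
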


\begin{proof}
For each $i \geqslant 1$, we have a sequence of Drinfeld modular forms $g_{i,j} \to f_i$ as $j \to \infty$. Standard arguments show that the sequence of Drinfeld modular forms $\{g_{i,i}\}_{i=1}^\infty$ converges to $f_0$ with respect to the $\vnorm{\,\cdot\,}$-norm and that the weights $k_i$ of $g_{i,i}$ go to $s_0$ in $\bbS$.
\end{proof}

We recall the definitions of Hecke operators on Drinfeld modular forms and their actions on $u$-expansions~\cite[\S 7]{Gekeler88}, \cite[\S 7]{Goss80c}.  For $\ell \in A_+$ irreducible of degree $e$, the Hecke operator $T_\ell : M_k^m \to M_k^m$ is defined by
\[
  (T_\ell f)(z) = \ell^k f(\ell z) + U_\ell f(z) = \ell^k f(\ell z) + \sum_{\beta \in A_{(< e)}} f\biggl( \frac{z+\beta}{\ell} \biggr).
\]
Just as in the classical case the operators $T_\ell$ and $U_\ell$ are uniquely determined by their actions on $u$-expansions.  We define $U_\ell$, $V_\ell : \power{\C}{u} \to \power{\C}{u}$ by
\begin{equation} \label{E:Uell}
  U_\ell \biggl( \sum_{n=0}^\infty c_n u^n \biggr) := \sum_{n=1}^\infty c_n G_{n,\Lambda_\ell} (\ell u),
\end{equation}
where $\Lambda_\ell \subseteq \C$ is the $e$-dimensional $\FF_q$-vector space of $\ell$-division points on the Carlitz module~$C$, and
\begin{equation} \label{E:Vell}
  V_\ell \biggl( \sum_{n=0}^\infty c_n u^n \biggr) := \sum_{n=0}^\infty c_n u_\ell^n.
\end{equation}
We find~\cite[Eq.~(7.3)]{Gekeler88} that $T_\ell : \power{\C}{u} \to \power{\C}{u}$ of weight $k$ is given by $T_\ell = \ell^k V_\ell + U_\ell$.

If $f \in \cM_s^m$ for some weight $s \in \bbS$, then we define $U_\ell(f)$, $V_\ell(f) \in K \otimes \power{A_v}{u}$ as above, and if $\ell \neq \wp$, we set
\begin{equation} \label{E:Tell}
  T_\ell(f) = \ell^s V_{\ell}(f) + U_{\ell} (f),
\end{equation}
where $\ell^s$ is defined as in~\eqref{E:atos} (note that if $\ell = \wp$, then~\eqref{E:atos} is not well-defined).  Of importance to us is that Hecke operators preserve spaces of $v$-adic modular forms.

\begin{prop} \label{P:Heckevadic}
Let $\ell \in A_+$ be irreducible, $\ell \neq \wp$.  For all $v$-adic weights $s$ and types $m$, the operators $T_\ell$, $U_{\wp}$, and $V_{\wp}$ preserve the spaces $\cM_s^m$ and $\cM_s^m(A_v)$.
\end{prop}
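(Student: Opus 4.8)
The plan is to reduce the whole statement to two inputs. The first is a continuity-and-integrality lemma: $U_\ell$ and $V_\ell$ are continuous $K_v$-linear operators on $K\otimes\power{A_v}{u}$ that preserve $\power{A_v}{u}$. For $V_\ell$ this is immediate, since it substitutes $u_\ell\in\power{A}{u}$ (with zero constant term) for $u$, so $\ord_v(V_\ell g)\geqslant\ord_v(g)$. For $U_\ell$, given on $u$-expansions through $G_{n,\Lambda_\ell}(\ell u)$, one checks that the coefficients are $v$-integral: for $\ell\neq\wp$ this holds because $\ell\in A_v^\times$ makes all $\alpha_j$ integral, while for $\ell=\wp$ the top coefficient $\alpha_d=1/\wp$ of $e_{\Lambda_\wp}=C_\wp/\wp$ has a pole that is cancelled by the powers of $\wp$ coming from the argument $\wp u$. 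Thus $\ord_v(U_\ell g)\geqslant\ord_v(g)$ as well, and both operators are norm-nonincreasing, hence continuous for $\vnorm{\,\cdot\,}$. Granting this, the $A_v$-statement follows from the $\cM_s^m$-statement by intersecting with $\power{A_v}{u}$, so I focus on $\cM_s^m$. The second input is the fact, recalled earlier, that on genuine forms the full Hecke operator stays modular, $T_\ell:M_k^m(K)\to M_k^m(K)$.

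For $T_\ell$ with $\ell\neq\wp$ this finishes quickly. Given $f\in\cM_s^m$, pick $f_i\in M_{k_i}^{m}(K)$ with $\vnorm{f_i-f}\to0$ and $k_i\to s$. Since $\ell\in A_v^\times$ we have $\vnorm{\ell^{k_i}}=1$, and by continuity of $s\mapsto\ell^s$ we get $\ell^{k_i}\to\ell^s$; combined with continuity of $U_\ell$ and $V_\ell$,
\[
  T_\ell f_i=\ell^{k_i}V_\ell f_i+U_\ell f_i\longrightarrow \ell^s V_\ell f+U_\ell f=T_\ell f.
\]
As each $T_\ell f_i\in M_{k_i}^{m}(K)$ and $k_i\to s$, Definition~\ref{Def:vadic} gives $T_\ell f\in\cM_s^m$.

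The genuine difficulty is $\ell=\wp$, where $U_\wp f$ and $V_\wp f$ are not modular and $\wp^s$ is undefined. Here I would use the Drinfeld analogue of Serre's weight-raising form: a modular form $h\in M_{q^d-1}^{0}(A)$ with $h\equiv1\pmod{\wp}$. Because $\ch K=p$, one has $h^{p^n}\equiv1\pmod{\wp^{p^n}}$ while its weight $(q^d-1)p^n\to0$ in $\bbS$. Fix first a genuine $f\in M_k^{m}(K)$ and set $f_n:=f\,h^{p^n}\in M_{k+(q^d-1)p^n}^{m}(K)$, so that $f_n\to f$ and the integer weights $k_n:=k+(q^d-1)p^n\to\infty$ while $k_n\to k$ in $\bbS$. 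In the identity $T_\wp f_n=\wp^{k_n}V_\wp f_n+U_\wp f_n$ the first term satisfies $\vnorm{\wp^{k_n}V_\wp f_n}\leqslant q^{-k_n}\vnorm{f_n}\to0$, so by continuity $T_\wp f_n\to U_\wp f$ with $T_\wp f_n\in M_{k_n}^{m}(K)$ and $k_n\to k$, whence $U_\wp f\in\cM_k^m$. For $V_\wp$ I then exploit that at integer weight $\wp^{k}\in K^\times$ is an honest scalar: $\wp^{k}V_\wp f=T_\wp f-U_\wp f$ lies in the $K_v$-vector space $\cM_k^m$, so $V_\wp f=\wp^{-k}(T_\wp f-U_\wp f)\in\cM_k^m$. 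Finally, for arbitrary $f\in\cM_s^m$ with approximants $f_i\in M_{k_i}^{m}(K)$, $k_i\to s$, the genuine case gives $U_\wp f_i,V_\wp f_i\in\cM_{k_i}^{m}$; continuity yields $U_\wp f_i\to U_\wp f$ and $V_\wp f_i\to V_\wp f$, and Proposition~\ref{P:limit}, with weights $k_i\to s$, places both limits in $\cM_s^m$.

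I expect the $\ell=\wp$ case to be the main obstacle, for two linked reasons: $\wp^s$ is not defined, so one cannot split $T_\wp$ $v$-adically as for $T_\ell$, and $U_\wp,V_\wp$ do not preserve modularity, so one cannot transport the genuine-form statement directly. The device that breaks the impasse is the form $h\equiv1\pmod{\wp}$, which pushes integer weights to $+\infty$, making the $\wp^{k}V_\wp$-term $v$-adically negligible so that $T_\wp$ degenerates to $U_\wp$ in the limit; $V_\wp$ is then recovered at integer weight where dividing by the scalar $\wp^{k}$ is legitimate. The two points I would still need to pin down carefully are the $v$-integrality of $U_\wp$ (the cancellation of the pole of $\alpha_d=1/\wp$ against the factors of $\wp$ in $G_{n,\Lambda_\wp}(\wp u)$) and the existence of the congruence $h\equiv1\pmod{\wp}$ in weight $q^d-1$.
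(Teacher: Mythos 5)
Your proposal is correct and follows essentially the same route as the paper: the reduction to integer weights tending to infinity as real numbers via powers of a weight-$(q^d-1)$ form congruent to $1$ modulo $\wp$ (the paper uses Gekeler's $g_d=-L_d E_{q^d-1}$ and Proposition~\ref{P:ge}), the degeneration $T_\wp f_i \to U_\wp f$ because $|\wp^{k_i}|_v\to 0$, and the recovery of $V_\wp$ at integer weight by dividing by the scalar $\wp^{k_i}$ followed by Proposition~\ref{P:limit}. The two points you flagged are exactly what the paper supplies: $g_d\equiv 1\pmod{[d]}$ gives the needed congruence in weight $q^d-1$, and the $v$-integrality of $U_\wp$ is obtained cleanly from the generating-series identity $\cG_{\Lambda_{\wp}}(\wp u,x)=\wp u x/(1-uC_{\wp}(x))\in\wp\cdot\power{A[u]}{x}$, which cancels the pole of $\alpha_d=1/\wp$.
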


We first define a sequence of normalized Eisenstein series studied by Gekeler~\cite[\S 6]{Gekeler88}.  For $d \geqslant 1$, we let
\begin{equation}
  g_d(z) = -L_d \cdot E_{q^d-1}(z),
\end{equation}
which is a Drinfeld modular form of weight $q^d-1$ and type $0$.  By the following proposition we see that $g_d$ plays the role of $E_{p-1}$ for classical modular forms.

\begin{prop}[{Gekeler~\cite[Prop.~6.9, Cor.~6.12]{Gekeler88}}]
\label{P:ge}
For $d \geqslant 1$, the following hold:
\begin{enumerate}
\item[(a)] $g_d \in \power{A}{u};$
\item[(b)] $g_d \equiv 1 \pmod{[d]}$.
\end{enumerate}
\end{prop}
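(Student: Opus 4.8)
The plan is to start from the $A$-expansion~\eqref{E:Eisexp} of $E_{q^d-1}$ and to pin down its constant term by a classical zeta evaluation. By the Carlitz evaluation $\zeta_C(q^d-1)=\tpi^{q^d-1}/L_d$ (a classical fact; see~\cite[\S 9.2]{Goss}), equation~\eqref{E:Eisexp} reads $E_{q^d-1}=-1/L_d-\sum_{a\in A_+}G_{q^d-1}(u_a)$, where I write $G_{q^d-1}=G_{q^d-1,\Lambda_C}$ as in~\eqref{E:gk}. Multiplying by $-L_d$ yields the clean form
\[
  g_d = 1 + L_d \sum_{a \in A_+} G_{q^d-1}(u_a),
\]
so the constant term of $g_d$ is exactly $1$. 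Since each $G_{q^d-1}(u_a)$ has zero constant term, this settles the constant-term contribution to both (a) and (b) at once, and reduces everything to the non-constant part $L_d\sum_{a\in A_+}G_{q^d-1}(u_a)$.

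Next I would reduce to a single polynomial. By~\eqref{E:uasum} we have $u_a=u^{q^{\deg a}}+\cdots\in\power{A}{u}$, so $\sum_{a\in A_+}G_{q^d-1}(u_a)$ converges $u$-adically and it is enough to control $L_dG_{q^d-1}(t)\in K[t]$. The crux is a denominator bound, which I will call the \emph{Key Lemma}: the coefficients of $G_{q^d-1}(t)$ have denominators that are squarefree (products of distinct $[i]$) and divide $[1][2]\cdots[d-1]$; equivalently, $L_{d-1}G_{q^d-1}(t)\in A[t]$. (Only $D_1,\dots,D_{d-1}$ can occur in these denominators, since in Gekeler's formula~\eqref{E:Gossexpansion} every tuple $\ui$ with $\sum_\ell i_\ell q^\ell=q^d-2$ has $i_\ell=0$ for $\ell\ge d$.) Granting the Key Lemma, the recursion $L_d=-[d]L_{d-1}$ gives $L_dG_{q^d-1}(t)=-[d]\,L_{d-1}G_{q^d-1}(t)\in[d]A[t]$; composing with the $u_a\in\power{A}{u}$ and summing places the non-constant part in $[d]\power{A}{u}$. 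Thus $g_d=1+[d]\cdot h$ with $h\in\power{A}{u}$, which proves both $g_d\in\power{A}{u}$ (part (a)) and $g_d\equiv1\pmod{[d]}$ (part (b)) simultaneously.

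The main obstacle is the Key Lemma, and its difficulty is that the bound is not visible term by term: Goss polynomials of nearby index, such as $G_{q^d-2}$, genuinely carry higher powers of the $[i]$ (already for $q=3$, $d=2$ one finds $[1]^{-2}$ in $G_7$, which then cancels in $G_8$), and it is special to the index $q^d-1$, whose base-$q$ digits all equal $q-1$, that the sub-leading denominators collapse. I would prove it from~\eqref{E:Gossexpansion}: the coefficient of $t^{j+1}$ is $\sum_{\ui}\binom{j}{\ui}\prod_\ell D_\ell^{-i_\ell}$, each monomial having $[i]$-adic valuation $-\sum_{\ell\ge i}i_\ell q^{\ell-i}$, and one must show via Lucas's and Kummer's theorems that every contribution of valuation below $-1$ is killed by the vanishing of its multinomial coefficient modulo $p$. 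A useful constraint along the way is that, since $q^d-1\equiv-1\pmod p$, Theorem~\ref{T:Gosspolys}(e) forces $\pd_t^1 G_{q^d-1}(t)=-t^{q^d-2}$, so every coefficient $\beta_{q^d-2,j}$ with $p\nmid(j+1)$ vanishes; the remaining coefficients can then be analyzed by induction on the recursion Theorem~\ref{T:Gosspolys}(a) or through the $p$-th power relation Theorem~\ref{T:Gosspolys}(d) applied to the digit structure of $q^d-1$. This denominator collapse is where essentially all the work lies; by contrast the Carlitz zeta evaluation and the reduction above are formal.
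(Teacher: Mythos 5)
The paper offers no proof of this proposition at all — it is quoted directly from Gekeler — so your argument is necessarily a new route, and its skeleton is sound: granting the classical evaluation $\zeta_C(q^d-1)/\tpi^{q^d-1}=1/L_d$ (which is indeed a theorem of Carlitz and not merely the rationality statement the paper records), the identity $g_d=1+L_d\sum_{a\in A_+}G_{q^d-1}(u_a)$ together with $u_a\in\power{A}{u}$ and the $u$-adic convergence of the sum correctly reduces both parts to your Key Lemma, namely $L_{d-1}\,G_{q^d-1}(t)\in A[t]$.

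The problem is that the Key Lemma is the entire content of the proposition, you do not prove it, and the one concrete mechanism you propose for proving it is demonstrably false. You assert that every monomial in Gekeler's formula~\eqref{E:Gossexpansion} whose denominator is worse than squarefree is killed by the vanishing of its multinomial coefficient modulo $p$. Take $q=2$, $d=3$, so $G_{q^d-1}=G_7$ and $k=6$: the tuple $\ui=(0,3,0)$ has $\binom{j}{\ui}=\binom{3}{0,3,0}=1\neq 0$ and contributes $t^4/[1]^3$, while $\ui=(2,0,1)$ contributes $t^4/([2][1]^2)$, also with unit multinomial coefficient. Neither is killed, and each individually violates your squarefree bound; the coefficient of $t^4$ equals $([1]+[2])/([1]^3[2])$, which collapses to $1/([1][2])$ only because of the polynomial identity $[2]-[1]=[1]^q$. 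So the denominator bound does hold here, but through cancellation \emph{between distinct tuples} $\ui$, not through Lucas--Kummer vanishing of individual multinomial coefficients, and your sketch supplies no mechanism for such cancellations. (Your side observation from Theorem~\ref{T:Gosspolys}(e), that $\beta_{q^d-2,j}=0$ unless $j=q^d-2$ or $p\mid j+1$, is correct but constrains only which coefficients are nonzero, not their denominators.) Until the Key Lemma is actually established — for instance by returning to Gekeler's own arguments for Prop.~6.9 and Cor.~6.12, which do not proceed through denominators of Goss polynomials — the proof is incomplete at its only substantive step.
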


\begin{proof}[Proof of Proposition~\ref{P:Heckevadic}]
Let $f \in \cM_s^m$.   Once we establish that $T_{\ell}(f)$, $U_{\wp}(f)$, and $V_{\wp}(f)$ are elements of $\cM_s^m$, we claim the statement about the operators preserving $\cM_s^m(A_v)$ is a consequence of \eqref{E:Uell}--\eqref{E:Tell}.  Indeed, in either case $\ell \neq \wp$ or $\ell = \wp$ we have $V_{\ell}(\power{A_v}{u}) \subseteq \power{A_v}{u}$, since in \eqref{E:Vell} the $u_\ell^n$ terms are in $\power{A}{u}$ by \eqref{E:uasum}.  Likewise for $U_{\ell}$, the polynomials $G_{n,\Lambda_{\ell}}(\ell u)$ in \eqref{E:Uell} are in $A[u]$, as the $\FF_q$-lattice $\Lambda_\ell$ has exponential function given by polynomials from the Carlitz action, namely $e_{\Lambda_{\ell}}(z) = C_{\ell}(z)/\ell$, and thus by Theorem~\ref{T:Gosspolys}(b),
\[
  \cG_{\Lambda_{\ell}} ( \ell u,x) = \sum_{n=1}^\infty G_{n,\Lambda_{\ell}}(\ell u) x^n
  = \frac{\ell u x}{1 - u C_{\ell}(x)} \in \ell \cdot \power{A[u]}{x}.
\]
Additionally we recall that the cases of $U_{\wp}$ and $V_{\wp}$ preserving $v$-integrality were previously proved by Vincent~\cite[Cor.~3.2, Prop.~3.3]{Vincent14}.

Now by hypothesis we can choose a sequence $\{ f_i \}$ of Drinfeld modular forms of weight $k_i$ and type $m$ so that $f_i \to f$ and $k_i \to s$.  By Proposition~\ref{P:ge}(b), for any $i \geqslant 0$,
\[
  g_d^{q^i} \equiv 1 \pmod{\wp^{q^i}},
\]
since $\ord_v([d]) = 1$.  The form $g_d^{q^i}$ has weight $(q^d-1)q^i$ and type~$0$, and certainly $f_i g_d^{q^i} \to f$ with respect to the $\vnorm{\,\cdot\,}$-norm.  However, we also have that as real numbers,
\[
  \textup{weight of $f_ig_d^{q^i}$} = k_i + (q^d-1)q^i \to \infty, \quad \textup{as $i \to \infty$}.
\]
Therefore, it suffices to assume that $k_i \to \infty$ as real numbers, as $i \to \infty$.

Suppose that $f = \sum c_n u^n$,  $f_i = \sum c_{n,i} u^n \in K \otimes \power{A_v}{u}$.  For $\ell \neq \wp$, since $\ell^{k_i} \to \ell^s$ and $c_{n,i} \to c_n$, we have
\[
  T_{\ell}(f_i) = \ell^{k_i} \sum_{n=0}^\infty c_{n,i} u_\ell^n +
    \sum_{n=0}^\infty c_{n,i} G_{n,\Lambda_\ell}(\ell u)
  \; \longrightarrow \;  T_{\ell}(f).
\]
Since $T_{\ell}(f_i) \in M_{k_i}^m(K)$, it follows that $T_{\ell}(f) \in \cM_s^m$.

Now consider the case $\ell = \wp$.  Since $k_i \to \infty$, we see that $|\wp^{k_i}|_v \to 0$. Therefore,
\[
  T_{\wp}(f_i) \to \sum_{n=0}^\infty c_n G_{n,\Lambda_{\wp}}(\wp u) = U_{\wp}(f),
\]
and so $U_{\wp}(f) \in \cM_s^m$.  By the same argument each $U_{\wp}(f_i) \in \cM_{k_i}^m$, starting with the constant sequence $f_i$ in the first paragraph.  By subtraction each
\begin{equation} \label{E:Vfi}
  V_{\wp}(f_i) =  \wp^{-k_i} \bigl( T_{\wp}(f_i) - U_{\wp}(f_i) \bigr)
\end{equation}
is then an element of $\cM_{k_i}^m$.  Because $c_{n,i} \to c_n$, we see from~\eqref{E:Vell} that $V_{\wp}(f_i) \to V_{\wp}(f)$ with respect to the $\vnorm{\,\cdot\,}$-norm.  Thus by Proposition~\ref{P:limit}, $V_{\wp}(f) \in \cM_s^m$ as desired.
\end{proof}

\section{Theta operators on $v$-adic modular forms} \label{S:Thetavadic}

As is well known the operators $\Theta^r$ do not generally take Drinfeld modular forms to Drinfeld modular forms~\cite{BosserPellarin08}, \cite{UchinoSatoh98}.  However, we will prove in this section that each $\Theta^r$, $r\geqslant 0$, does preserve spaces of $v$-adic modular forms.  Using the equivalent formulations in~\eqref{E:Thetaf} and~\eqref{E:Thetafbeta}, we define $K_v$-linear operators
\[
  \Theta^r : K \otimes \power{A_v}{u} \to K \otimes \power{A_v}{u}, \quad r \geqslant 0.
\]

\begin{thm} \label{T:Thetavadic}
For any weight $s \in \bbS$ and type $m \in \ZZ/(q-1)\ZZ$, we have for $r \geqslant 0$,
\[
  \Theta^r : \cM_{s}^m \to \cM_{s+2r}^{m+r}.
\]
\end{thm}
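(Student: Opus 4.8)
The plan is to induct on $r$, reducing the statement for arbitrary $v$-adic forms to the case of $K$-rational Drinfeld modular forms via a limiting argument, and treating the latter through the Serre operator $\cD^r$ of Theorem~\ref{T:SerreOp}. The base case $r=0$ is immediate since $\Theta^0$ is the identity. For the inductive step I assume that for every $r'<r$ the operator $\Theta^{r'}$ maps $\cM_s^m$ into $\cM_{s+2r'}^{m+r'}$ for all weights $s$ and types $m$.

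A preliminary observation is that each $\Theta^r$ is a \emph{bounded} (hence continuous) $K_v$-linear operator on $K \otimes \power{A_v}{u}$. Indeed, by~\eqref{E:Thetafbeta} the coefficient of $u^N$ in $\Theta^r(f)$, for $f = \sum_n c_n u^n$, equals $\sum_{j=0}^r \beta_{r,j} \binom{N-1}{j} c_{N-j}$; since the $\beta_{r,j}\in K$ are fixed and the integer binomials $\binom{N-1}{j}$ are $v$-integral, one obtains the estimate $\vnorm{\Theta^r(f)} \leqslant C_r \vnorm{f}$ with $C_r = \max_{0 \leqslant j \leqslant r} |\beta_{r,j}|_v$. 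In particular $\Theta^r$ is continuous for the $v$-adic topology, which is what will justify interchanging it with $v$-adic limits.

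Next I would establish the inductive step on $K$-rational modular forms: for $g \in M_k^m(K)$ I claim $\Theta^r(g) \in \cM_{k+2r}^{m+r}$. Rearranging the definition~\eqref{E:SerreOp} of the Serre operator gives
\[
\Theta^r(g) = \cD^r(g) - \sum_{\ell=1}^r (-1)^\ell \binom{k+r-1}{\ell}\, \Theta^{r-\ell}(g)\, \Theta^{\ell-1}(E).
\]
Here $\cD^r(g) \in M_{k+2r}^{m+r}(K)$ is a genuine $K$-rational modular form by Theorem~\ref{T:SerreOp} (with rationality following from Corollary~\ref{C:Thetaf} and the $A$-expansion~\eqref{E:falseexp} of $E$), hence lies in $\cM_{k+2r}^{m+r}$. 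For the summands, both $g$ and $E$ are $v$-adic forms ($g$ as a $K$-rational modular form, and $E$ of weight $2$ and type $1$ by Theorem~\ref{T:Evadic}), so the inductive hypothesis applies with $r-\ell,\ \ell-1 < r$ to give $\Theta^{r-\ell}(g) \in \cM_{k+2(r-\ell)}^{m+r-\ell}$ and $\Theta^{\ell-1}(E) \in \cM_{2\ell}^{\ell}$. The product property $\cM_{s_1}^{m_1} \cdot \cM_{s_2}^{m_2} \subseteq \cM_{s_1+s_2}^{m_1+m_2}$ places each product in $\cM_{k+2r}^{m+r}$, and since $\cM_{k+2r}^{m+r}$ is a $K_v$-vector space (so closed under the scalars $(-1)^\ell\binom{k+r-1}{\ell}$ and under addition), the entire right-hand side lies there.

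Finally I would pass to the limit. Given $f \in \cM_s^m$, choose $f_i \in M_{k_i}^m(K)$ with $f_i \to f$ and $k_i \to s$ in $\bbS$. By the previous step each $\Theta^r(f_i) \in \cM_{k_i+2r}^{m+r}$, and by continuity $\Theta^r(f_i) \to \Theta^r(f)$ in the $\vnorm{\,\cdot\,}$-norm; since $k_i + 2r \to s+2r$, Proposition~\ref{P:limit} yields $\Theta^r(f) \in \cM_{s+2r}^{m+r}$, completing the induction. The main obstacle in this argument is the separate input that $E$ is itself a $v$-adic form (Theorem~\ref{T:Evadic}): without it the inductive step cannot absorb the correction terms $\Theta^{\ell-1}(E)$, and indeed this is flagged in the introduction as the crux of the section. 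A secondary point requiring care is the boundedness estimate above, which is precisely what legitimizes commuting $\Theta^r$ with the $v$-adic limit in the final step.
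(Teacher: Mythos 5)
Your proposal is correct and follows essentially the same route as the paper's own proof: reduce to $K$-rational modular forms via the continuity of $\Theta^r$ coming from Corollary~\ref{C:Thetaf} together with Proposition~\ref{P:limit}, then induct on $r$ using the Serre operator identity \eqref{E:SerreOp}, Theorem~\ref{T:SerreOp}, and Theorem~\ref{T:Evadic} to absorb the correction terms $\Theta^{r-\ell}(f)\,\Theta^{\ell-1}(E)$. The only differences are organizational (you make the boundedness estimate for $\Theta^r$ explicit and phrase the induction uniformly over all weights and types, which the paper does implicitly).
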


This can be seen as similar in spirit to the results of Bosser and Pellarin~\cite[Thm.~2]{BosserPellarin08}, \cite[Thm.~4.1]{BosserPellarin09} (see also Theorem~\ref{T:SerreOp}), that $\Theta^r$ preserves spaces of Drinfeld quasi-modular forms, and our main arguments rely on essentially showing that quasi-modular forms with $K_v$-coefficients are $v$-adic and applying Theorem~\ref{T:SerreOp}. Consider first the operator $\Theta = \Theta^1$, which can be equated by~\eqref{E:uder1z} with the operation on $u$-expansions given by
\[
  \Theta = u^2\, \pd_u^1.
\]
We recall a formula of Gekeler~\cite[\S 8]{Gekeler88} (take $r=1$ in~\eqref{E:SerreOp}), which states that for $f \in M_k^m$,
\[
  \Theta(f) = \cD^1(f) + kEf,
\]
where $E$ is the false Eisenstein series whose $u$-expansion is given in~\eqref{E:falseexp}.  Our first goal is to show that $E$ is a $v$-adic modular form, for which we use results of Goss and Petrov.  For $k$, $n \geqslant 1$ and $s \in \bbS$, we set
\begin{equation}
  f_{k,n} := \sum_{a \in A_+} a^{k-n} G_n(u_a), \quad  \fhat_{s,n} := \sum_{\substack{a \in A_+\\ \wp\, \nmid\, a}} a^s G_{n}(u_a).
\end{equation}
The notation $f_{k,n}$ and $\fhat_{s,n}$ is not completely consistent, since $f_{k,n}$ is more closely related to $\fhat_{k-n,n}$ than $\fhat_{k,n}$, but this viewpoint is convenient in many contexts (see~\cite{Goss14}).

\begin{thm}[{Goss~\cite[Thm.~2]{Goss14}, Petrov~\cite[Thm.~1.3]{Petrov13}}] \label{T:PetrovGoss} \
\begin{enumerate}
\item[(a)] \textup{(Petrov)} Let $k$, $n \geqslant 1$ be chosen so that $k-2n > 0$, $k \equiv 2n \pmod{q-1}$, and $n \leqslant p^{\ord_p(k-n)}$.  Then
\[
f_{k,n} \in M_k^m(K),
\]
where $m \equiv n \pmod{q-1}$.
\item[(b)] \textup{(Goss)} Let $n \geqslant 1$.  For $s = (x,y) \in \bbS$ with $x \equiv n \pmod{q-1}$ and $y \equiv 0 \pmod{q^{\lceil \log_q(n) \rceil}}$, we have
\[
\fhat_{s,n} \in \cM_{s+n}^m,
\]
where $m \equiv n \pmod{q-1}$.
\end{enumerate}
\end{thm}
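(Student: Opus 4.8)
The plan is to establish part~(a) first and then deduce part~(b) from it by a $v$-adic limiting argument modeled on the proof of Proposition~\ref{P:Heckevadic}. For part~(a), my first step is to reduce to the base case $n=1$ using the Goss-polynomial hyperderivative identities of \S\ref{S:Gosspolys}. Applying Lemma~\ref{L:Lem1}(c) with $\Lambda=\Lambda_C$ in the variable $w=\tpi a z$, converting $\pd_w$ to $\pd_z$ by the chain rule, and using $u_a=t_{\Lambda_C}(\tpi a z)$ from Remark~\ref{R:uvst}, I get
\[
  \pd_z^{\,n-1}(u_a) = (-\tpi a)^{n-1}\,G_n(u_a), \qquad\textup{equivalently}\qquad G_n(u_a) = \frac{1}{(-\tpi a)^{n-1}}\,\pd_z^{\,n-1}(u_a).
\]
Multiplying by $a^{k-n}$, summing over $a\in A_+$, and pulling $\pd_z^{\,n-1}$ through the sum (legitimate by the rigid-analytic convergence of \S\ref{S:functions}) yields the clean identity
\[
  f_{k,n} = \frac{1}{(-\tpi)^{n-1}}\,\pd_z^{\,n-1}\Bigl(\sum_{a\in A_+} a^{k-2n+1}u_a\Bigr) = \Theta^{n-1}\bigl(f_{k-2n+2,\,1}\bigr),
\]
which holds with no hypothesis beyond $n\geq 1$.

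The second step is the base case, and I expect it to be the main obstacle, since it carries all the genuine automorphy. I must show $f_{k',1}=\sum_{a\in A_+}a^{k'-1}u_a\in M_{k'}^1(K)$ whenever $k'\equiv 2\pmod{q-1}$ and $k'>2$; here $k'=k-2n+2$, so $k'>2\Leftrightarrow k-2n>0$ and $k'\equiv 2\Leftrightarrow k\equiv 2n$. Holomorphy at $\infty$, $A$-periodicity, and $K$-rationality are immediate from $u_a\in\power{A}{u}$ (see \eqref{E:uasum}); the content is the $\GL_2(A)$-transformation law. Following Petrov, I would symmetrize the defining sum over $\FF_q^\times$ (legitimate since $k'\equiv 2\pmod{q-1}$) and rewrite it via \eqref{E:tsum} as the double sum $-\tpi^{-1}\sum_{a\in A\setminus\{0\}}\sum_{c\in A}a^{k'-1}(az-c)^{-1}$, then verify the weight-$k'$ transformation law directly; the substance of the argument is the manipulation of this Eisenstein-type double sum under $z\mapsto\gamma z$.

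The third step propagates the base case and pins down the arithmetic hypothesis. Applying the Serre operator $\cD^{\,n-1}$ of \eqref{E:SerreOp} to $f_{k',1}\in M_{k'}^1$, Theorem~\ref{T:SerreOp} gives $\cD^{\,n-1}(f_{k',1})\in M_{k'+2(n-1)}^{1+(n-1)}=M_k^{m}$ with $m\equiv n\pmod{q-1}$. By \eqref{E:SerreOp} and $k'+(n-1)-1=k-n$,
\[
  \cD^{\,n-1}(f_{k',1}) - \Theta^{n-1}(f_{k',1}) = \sum_{i=1}^{n-1}(-1)^i\binom{k-n}{i}\,\Theta^{\,n-1-i}(f_{k',1})\,\Theta^{\,i-1}(E).
\]
Under the hypothesis $n\leq p^{\ord_p(k-n)}$, Lucas's theorem shows $\binom{k-n}{i}\equiv 0\pmod p$ for every $1\leq i\leq n-1$ (the least positive index with $\binom{k-n}{i}\not\equiv 0$ is $p^{\ord_p(k-n)}>n-1$), so all correction terms vanish and $f_{k,n}=\Theta^{n-1}(f_{k',1})=\cD^{\,n-1}(f_{k',1})\in M_k^m$. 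This is exactly where the condition $n\leq p^{\ord_p(k-n)}$ is used.

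For part~(b), I would choose, using the hypotheses on $s=(x,y)$, positive integers $k_i\to\infty$ with $k_i-n\equiv x\pmod{q^d-1}$ and $k_i-n\equiv y\pmod{p^{\ell_i}}$ for some $\ell_i\to\infty$ (possible by CRT since $\gcd(q^d-1,p)=1$). The conditions $x\equiv n\pmod{q-1}$ and $y\equiv 0\pmod{q^{\lceil\log_q n\rceil}}$ then force $k_i\equiv 2n\pmod{q-1}$ and $n\leq p^{\ord_p(k_i-n)}$, so part~(a) gives $f_{k_i,n}\in M_{k_i}^m(K)$ with $k_i\to s+n$ in $\bbS$. It remains to show $\vnorm{f_{k_i,n}-\fhat_{s,n}}\to 0$ and invoke Proposition~\ref{P:limit}. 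Splitting $f_{k_i,n}$ into the sum over $\wp\nmid a$ and the sum over $\wp\mid a$, the latter has $\ord_v(a^{k_i-n})\geq k_i-n$, so (up to the fixed, $n$-dependent denominator of the coefficients of $G_n$) its valuation is $\geq k_i-n\to\infty$. For the former, writing $a=a_1a_2$ as in \eqref{E:atos} and using the Frobenius identity $(1+\wp w)^{p^{\ell}}=1+\wp^{p^{\ell}}w^{p^{\ell}}$ gives $\ord_v(a^{k_i-n}-a^s)\geq p^{\ell_i}$ uniformly in $a$, whence $\sum_{\substack{a\in A_+\\ \wp\,\nmid\,a}}(a^{k_i-n}-a^s)G_n(u_a)$ also tends $v$-adically to $0$. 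Both pieces vanish, so $f_{k_i,n}\to\fhat_{s,n}$ and $\fhat_{s,n}\in\cM_{s+n}^m$ as claimed. Relative to part~(a), this limiting argument is formal.
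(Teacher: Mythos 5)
First, a point of comparison: the paper does not prove this theorem at all --- it is imported verbatim from Petrov \cite[Thm.~1.3]{Petrov13} and Goss \cite[Thm.~2]{Goss14}, with only the remark that Goss's argument extends to the slightly stronger form of part (b). So your proposal is not competing with an in-paper proof; it is an attempt to reconstruct the cited results. Much of it is sound and in fact tracks known arguments: the identity $f_{k,n}=\Theta^{n-1}(f_{k-2n+2,1})$ via Lemma~\ref{L:Lem1}(c) is correct and is essentially the mechanism of \cite{Petrov15}; the Lucas-theorem computation showing $\binom{k-n}{i}\equiv 0\pmod p$ for $1\leqslant i\leqslant n-1$ under $n\leqslant p^{\ord_p(k-n)}$ is correct and identifies exactly where that hypothesis enters; and the deduction of (b) from (a) by choosing $k_i\to s+n$ with $k_i-n\equiv x\pmod{q^d-1}$, $k_i-n\equiv y\pmod{p^{\ell_i}}$ and invoking Proposition~\ref{P:limit} is the same $v$-adic limiting argument Goss uses and is fine modulo (a).

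The genuine gap is your base case. The statement that $f_{k',1}=\sum_{a\in A_+}a^{k'-1}u_a$ lies in $M_{k'}^1(K)$ for $k'>2$, $k'\equiv 2\pmod{q-1}$ is precisely the $n=1$ case of Petrov's theorem; it includes the nontrivial fact that $h=f_{q+1,1}$, and more generally produces cusp forms with $u$-expansion $u+O(u^2)$. You have therefore reduced the theorem to a special case of itself, and your sketch of that special case does not work as written. After symmetrizing over $\FF_q^\times$ and applying \eqref{E:tsum}, the double sum $\sum_{a\neq 0}\sum_{b}a^{k'-1}(az+b)^{-1}$ is only conditionally convergent as an iterated sum (for fixed $z$ the individual terms satisfy $|a^{k'-1}/(az+b)|_\infty\asymp|a|_\infty^{k'-2}$, which does not tend to $0$), so it cannot be rearranged freely under $z\mapsto\gamma z$. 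Indeed, the identical formal manipulation with $k'=2$ would ``prove'' that the false Eisenstein series $E=f_{2,1}$ is modular, which is false --- $E$ is only quasi-modular. Any correct argument must use $k'>2$ in an essential way (controlling the failure of absolute convergence, which is exactly the delicate point in Petrov's actual proof), and your proposal does not identify where or how that happens. Until the base case is established by some genuinely new input, the remaining (correct) reductions carry no force.
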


We note that the statement of Theorem~\ref{T:PetrovGoss}(b) is slightly stronger than what is stated in~\cite{Goss14}, but Goss' proof works here without changes.  We then have the following corollary.

\begin{cor}
For any $\ell \equiv 0 \pmod{q-1}$, we have $\fhat_{\ell+1,1} \in \cM_{\ell+2}^1$.
\end{cor}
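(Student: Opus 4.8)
The plan is to obtain this corollary as a direct specialization of Theorem~\ref{T:PetrovGoss}(b), taking $n = 1$ and $s = \ell + 1$ viewed as an element of $\bbS$ via the canonical embedding $\ZZ \hookrightarrow \bbS$, $n \mapsto (\on, n)$. So the real content is not a new construction but the bookkeeping needed to check that the pair $(n, s) = (1, \ell+1)$ satisfies the two hypotheses of that theorem, after which its conclusion reads off exactly the desired membership. I would therefore organize the argument as: (i) unwind the hypotheses of Theorem~\ref{T:PetrovGoss}(b) in the case $n = 1$, (ii) verify them for $s = \ell + 1$, and (iii) translate the resulting weight and type.

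For step~(i), the key simplification is that with $n = 1$ we have $\lceil \log_q(1) \rceil = 0$, so the $p$-adic congruence $y \equiv 0 \pmod{q^{\lceil \log_q(n) \rceil}}$ becomes $y \equiv 0 \pmod{1}$, which holds vacuously for any $y \in \ZZ_p$. Thus the only genuine condition to check is the finite-part congruence $x \equiv n \pmod{q-1}$, where $s = (x,y)$. For step~(ii), write $s = \ell + 1 = (\overline{\ell+1}, \ell+1) \in \bbS$, so that $x$ is the class of $\ell + 1$ modulo $q^d - 1$. Since $(q-1) \mid (q^d-1)$, reducing $x$ further modulo $q-1$ gives the class of $\ell + 1$ in $\ZZ/(q-1)\ZZ$; and because $\ell \equiv 0 \pmod{q-1}$ by hypothesis, this class is $1 = n$. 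Hence $x \equiv n \pmod{q-1}$, and both hypotheses hold.

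For step~(iii), Theorem~\ref{T:PetrovGoss}(b) then yields $\fhat_{s,1} \in \cM_{s+1}^m$ with $m \equiv n = 1 \pmod{q-1}$, so that $m = 1$ in $\ZZ/(q-1)\ZZ$. Finally, compatibility of the embedding $\ZZ \hookrightarrow \bbS$ with addition gives $s + 1 = (\ell+1) + 1 = \ell + 2$ in $\bbS$, so $\fhat_{\ell+1,1} \in \cM_{\ell+2}^1$, as claimed. The main (indeed only) subtle point is the one handled in step~(ii): correctly tracking the embedding of $\ell+1$ into $\bbS$ and observing that the $n = 1$ case collapses the otherwise restrictive $p$-adic condition on $y$ to something automatic, so that the hypothesis $\ell \equiv 0 \pmod{q-1}$ is precisely what is needed to match $x$ with $n$.
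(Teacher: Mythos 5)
Your proposal is correct and matches the paper's (implicit) argument: the corollary is stated as an immediate specialization of Theorem~\ref{T:PetrovGoss}(b) with $n=1$ and $s=\ell+1$, and your verification that the condition on $y$ collapses since $\lceil \log_q(1)\rceil = 0$ while the condition $x \equiv 1 \pmod{q-1}$ is exactly the hypothesis $\ell \equiv 0 \pmod{q-1}$ is precisely the intended bookkeeping.
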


If we take $\ell = 0$, we see that
\[
  \fhat_{1,1} = \sum_{\substack{a \in A_+\\ \wp\, \nmid\, a}} a u_a \in \power{A_v}{u}
\]
is a $v$-adic modular form in $\cM_2^1(A_v)$ and is a partial sum of $E$ in~\eqref{E:falseexp}.  From this we can prove that $E$ itself is a $v$-adic modular form.

\begin{thm} \label{T:Evadic}
The false Eisenstein series $E$ is a $v$-adic modular form in $\cM_{2}^1(A_v)$.
\end{thm}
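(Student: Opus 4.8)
The plan is to realize $E$ as an explicit $v$-adically convergent series built from the form $\fhat_{1,1}$, which the preceding corollary identifies as an element of $\cM_2^1(A_v)$. The key device is a functional equation relating $E$ to its own image under the operator $V_\wp$ of~\eqref{E:Vell}, obtained by sorting the monic polynomials $a$ appearing in~\eqref{E:falseexp} according to whether $\wp \mid a$.

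First I would split the $u$-expansion $E = \sum_{a \in A_+} a u_a$ of~\eqref{E:falseexp} as
\[
  E = \sum_{\substack{a \in A_+ \\ \wp \nmid a}} a u_a + \sum_{\substack{a \in A_+ \\ \wp \mid a}} a u_a
  = \fhat_{1,1} + \sum_{\substack{a \in A_+ \\ \wp \mid a}} a u_a.
\]
In the second sum, since $\wp$ is monic, writing $a = \wp b$ sets up a bijection with $b$ ranging over $A_+$. The operation $V_\wp$ replaces $u$ by $u_\wp$ in a $u$-expansion, which corresponds to the substitution $z \mapsto \wp z$; hence from~\eqref{E:uadef} we have $V_\wp(u_b) = u_{\wp b}$, and, $V_\wp$ being continuous and $K_v$-linear, it commutes with the convergent sum over $A_+$. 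Therefore
\[
  \sum_{\substack{a \in A_+ \\ \wp \mid a}} a u_a = \wp \sum_{b \in A_+} b\, u_{\wp b} = \wp\, V_\wp(E),
\]
which gives the functional equation $E = \fhat_{1,1} + \wp\, V_\wp(E)$.

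Iterating this $N$ times yields $E = \sum_{j=0}^{N-1} \wp^j V_\wp^j(\fhat_{1,1}) + \wp^N V_\wp^N(E)$. Since $E \in \power{A}{u}$ and $V_\wp$ preserves $v$-integrality (as shown in the proof of Proposition~\ref{P:Heckevadic}), the remainder satisfies $\ord_v(\wp^N V_\wp^N(E)) \geqslant N$, so $\vnorm{\wp^N V_\wp^N(E)} \to 0$ and $E = \sum_{j=0}^\infty \wp^j V_\wp^j(\fhat_{1,1})$ in the $\vnorm{\,\cdot\,}$-norm. Each summand lies in $\cM_2^1(A_v)$: indeed $\fhat_{1,1} \in \cM_2^1(A_v)$ by the preceding corollary, and Proposition~\ref{P:Heckevadic} shows that $V_\wp$ preserves weight, type, and $v$-integrality, while multiplication by $\wp^j \in A_v$ keeps us in $\power{A_v}{u}$. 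Thus the partial sums form a sequence in $\cM_2^1(A_v)$, of constant weight $2 \in \bbS$ and type $1$, converging $v$-adically to $E$, and Proposition~\ref{P:limit} forces $E \in \cM_2^1$. Finally $E \in \power{A}{u}$ is $v$-integral, so $E \in \cM_2^1(A_v)$.

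The main obstacle is the bookkeeping in the first two paragraphs: establishing $V_\wp(u_b) = u_{\wp b}$ directly from the definition of $V_\wp$ on $u$-expansions, and justifying the interchange of $V_\wp$ with the infinite sum defining $E$. Once the functional equation $E = \fhat_{1,1} + \wp\, V_\wp(E)$ is in hand, the $v$-adic convergence of the resulting geometric-type series and the appeals to Propositions~\ref{P:Heckevadic} and~\ref{P:limit} are routine.
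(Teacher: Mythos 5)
Your proposal is correct and follows essentially the same route as the paper: both decompose $E = \sum_{a \in A_+} a u_a$ by the power of $\wp$ dividing $a$, arrive at $E = \sum_{j \geqslant 0} \wp^j V_{\wp}^{\circ j}(\fhat_{1,1})$, and conclude via Proposition~\ref{P:Heckevadic} and Proposition~\ref{P:limit}. Your packaging of the decomposition as the functional equation $E = \fhat_{1,1} + \wp\, V_\wp(E)$ is a presentational variant of the paper's direct term-by-term peeling, not a different argument.
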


\begin{proof}
Starting with the expansion in \eqref{E:falseexp}, we see that $E \in \power{A_v}{u}$.  Also,
\begin{align*}
  E = \sum_{a \in A_+} a u_a &= \sum_{\substack{a \in A_+\\ \wp\, \nmid\, a}} a u_a + \wp \sum_{a \in A_+} a u_{\wp a} \\
  &= \sum_{\substack{a \in A_+\\ \wp\, \nmid\, a}} a u_a + \wp \sum_{\substack{a \in A_+\\ \wp\, \nmid\, a}} a u_{\wp a}
  + \wp^2 \sum_{a \in A_+} a u_{\wp^2 a},
\end{align*}
and continuing in this way, we find
\[
  E = \sum_{j=0}^\infty \Biggl( \wp^j \sum_{\substack{a \in A_+\\ \wp\, \nmid\, a}} a u_{\wp^j a} \Biggr).
\]
We note that
\[
  \sum_{\substack{a \in A_+\\ \wp\, \nmid\, a}} a u_{\wp^j a} = V_{\wp}^{\circ j} \Biggl( \sum_{\substack{a \in A_+\\ \wp\, \nmid\, a}} a u_a \Biggr) = V_{\wp}^{\circ j} (\fhat_{1,1}),
\]
where $V_{\wp}^{\circ j}$ is the $j$-th iterate $V_{\wp} \circ \cdots \circ V_{\wp}$.  By Proposition~\ref{P:Heckevadic}, we see that $V_{\wp}^{\circ j}(\fhat_{1,1}) \in \cM_2^1(A_v)$ for all $j$.  Moreover,
\[
  E = \sum_{j=0}^\infty \wp^j V_{\wp}^{\circ j}(\fhat_{1,1}),
\]
the right-hand side of which converges with respect to the $\vnorm{\,\cdot\,}$-norm, and so we are done by Proposition~\ref{P:limit}.
\end{proof}

\begin{proof}[Proof of Theorem~\ref{T:Thetavadic}]
Let $f \in \cM_s^m$ and pick $f_i \in M_{k_i}^m(K)$ with $f_i \to f$.  It follows from the formulas in Corollary~\ref{C:Thetaf} that $\Theta^r(f_i) \to \Theta^r(f)$ with respect to the $\vnorm{\,\cdot\,}$-norm for each $r \geqslant 0$, so by Proposition~\ref{P:limit} it remains to show that each
\[
  \Theta^r(f_i) \in \cM_{k_i+2r}^{m+r}.
\]
We proceed by induction on $r$.  If $r = 1$, then since $\cD^1(f_i) \in M_{k_i+2}^{m+1}(K)$ for each $i$ by Theorem~\ref{T:SerreOp}, it follows from Theorem~\ref{T:Evadic} that
\[
  \Theta(f_i) = \cD^1(f_i) + k_i Ef_i \in \cM_{k_i+2}^{m+1},
\]
for each $i$.  Now by \eqref{E:SerreOp}, for each $i$
\[
  \Theta^r(f_i) = \cD^r(f_i) - \sum_{j=1}^r (-1)^j \binom{k_i + r-1}{j} \Theta^{r-j}(f_i) \Theta^{j-1}(E).
\]
By Theorem~\ref{T:SerreOp}, $\cD^r(f_i) \in M_{k_i+2r}^{m+r}(K)$, and by the induction hypothesis and Theorem~\ref{T:Evadic} the terms in the sum are in $\cM_{k_i+2r}^{m+r}$.
\end{proof}

\section{Theta operators and $v$-adic integrality} \label{S:vintegral}

We see from Theorem~\ref{T:Thetavadic} that $\Theta^r : \cM_s^m \to \cM_{s+2r}^{m+r}$, and it is a natural question to ask whether $\Theta^r$ preserves $v$-integrality, i.e.,
\[
  \Theta^r : \cM_s^m(A_v) \stackrel{?}{\to} \cM_{s+2r}^{m+r}(A_v).
\]
However, it is known that this can fail for $r$ sufficiently large because of the denominators in $G_{r+1}(u)$ (e.g., see Vincent~\cite[Cor.~1]{Vincent15}).  Nevertheless, in this section we see that $\Theta^r$ is not far off from preserving $v$-integrality.

For an $A$-algebra $R$ and a sequence $\{ b_m \} \subseteq R$, we define an $A$-Hurwitz series over $R$ (cf.\ \cite[\S 9.1]{Goss}) by
\begin{equation}
  h(x) = \sum_{m=0}^\infty \frac{b_m}{\Pi_m} x^m \in \power{(K \otimes_A R)}{x},
\end{equation}
where we recall the definition of the Carlitz factorial $\Pi_m$ from~\eqref{E:Pi}.  Series of this type were initially studied by Carlitz~\cite[\S 3]{Carlitz37} and further investigated by Goss~\cite[\S 3]{Goss78}, \cite[\S 9.1]{Goss}.  The particular cases we are interested in are when $R = A$ or $R=A[u]$, but we have the following general proposition whose proof can be easily adapted from \cite[\S 3.2]{Goss78}, \cite[Prop.~9.1.5]{Goss}.

\begin{prop} \label{P:Hurwitz}
Let $R$ be an $A$-algebra, and let $h(x)$ be an $A$-Hurwitz series over $R$.
\begin{enumerate}
\item[(a)] If the constant term of $h(x)$ is $1$, then $1/h(x)$ is also an $A$-Hurwitz series over $R$.
\item[(b)] If $g(x)$ is an $A$-Hurwitz series over $R$ with constant term $0$, then $h(g(x))$ is also an $A$-Hurwitz series over $R$.
\end{enumerate}
\end{prop}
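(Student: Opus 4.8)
The plan is to clear the Carlitz-factorial denominators and reduce both parts to the integrality of Carlitz multinomial coefficients, which is already contained in the cited work of Carlitz~\cite{Carlitz37} and Goss~\cite[\S 3.2]{Goss78}, \cite[\S 9.1]{Goss}; the only new feature is that the numerators $b_m$ lie in an arbitrary $A$-algebra $R$, and since every manipulation below is $R$-linear this is harmless. The one basic fact I will use repeatedly is that a product of two $A$-Hurwitz series over $R$ is again one: if $h_1 = \sum_k a_k x^k/\Pi_k$ and $h_2 = \sum_k c_k x^k/\Pi_k$, then the coefficient of $x^k$ in $h_1 h_2$ is $\tfrac{1}{\Pi_k}\sum_{i=0}^k \binom{k}{i}_C\, a_i c_{k-i}$, where $\binom{k}{i}_C := \Pi_k/(\Pi_i \Pi_{k-i}) \in A$ is the Carlitz binomial coefficient. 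Iterating, every finite product of $A$-Hurwitz series over $R$ is again one.

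For part~(a), I would write $h = 1 - \eta$, where $\eta := 1 - h$ is an $A$-Hurwitz series over $R$ with constant term $0$, hence of $x$-order at least $1$. Then $1/h = \sum_{j \geqslant 0} \eta^j$, and since $\eta^j$ has $x$-order at least $j$, the coefficient of each $x^k$ receives contributions only from $j = 0, \dots, k$; thus the sum converges coefficientwise and no analytic issue arises. Each $\eta^j$ is a Hurwitz series by the product closure above, so the coefficient of $x^k$ in $1/h$ is a finite $R$-linear combination of Hurwitz coefficients, and therefore again of the form (element of $R$)$/\Pi_k$. Hence $1/h$ is an $A$-Hurwitz series over $R$.

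For part~(b), I would expand $h(g(x)) = \sum_{m \geqslant 0} \tfrac{b_m}{\Pi_m}\, g(x)^m$. As $g$ has constant term $0$, $g^m$ has $x$-order at least $m$, so once more the coefficient of $x^k$ is a finite sum over $m = 0,\dots,k$ and convergence is automatic; it therefore suffices to prove that $g(x)^m/\Pi_m$ is an $A$-Hurwitz series over $R$ for each $m$. Writing $g = \sum_{i \geqslant 1} a_i x^i/\Pi_i$ and grouping the monomials of $g^m$ by the multiset of exponents used, the coefficient of $x^k$ in $g^m/\Pi_m$ becomes $\tfrac{1}{\Pi_k}$ times an $R$-linear combination of the quantities
\[
  \binom{m}{\mu_1, \dots, \mu_r} \cdot \frac{\Pi_k}{\Pi_m\, \Pi_{j_1}^{\mu_1} \cdots \Pi_{j_r}^{\mu_r}}, \qquad \textstyle\sum_\ell \mu_\ell = m, \quad \sum_\ell \mu_\ell j_\ell = k,
\]
indexed by multisets with distinct exponents $j_1 < \cdots < j_r$ of multiplicities $\mu_\ell$. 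The claim reduces to showing that each such quantity lies in $A$, which is the Carlitz analog of the elementary fact that the number of partitions of a $k$-set into blocks of prescribed sizes is an integer, and is exactly what one adapts from~\cite[\S 3.2]{Goss78} and~\cite[Prop.~9.1.5]{Goss}.

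The main obstacle is precisely this last integrality in~(b). Unlike the binomial case in~(a), the extra factor $\Pi_m$ in the denominator destroys term-by-term integrality: an individual composition $i_1 + \cdots + i_m = k$ can give $\Pi_k/(\Pi_m \Pi_{i_1} \cdots \Pi_{i_m}) \notin A$ (for $q=2$, $m=2$, $(i_1,i_2)=(1,2)$ this ratio is $1/[1]$), and integrality is recovered only after the equal-exponent terms are collected with their multinomial coefficient. I would therefore argue one place at a time: for each finite $v$, Kummer's theorem says $\binom{m}{\mu_1,\dots,\mu_r}$ reduces to a nonzero element of $\FF_p \subseteq A$ exactly when no carries occur in the base-$p$ addition $\mu_1 + \cdots + \mu_r = m$, in which case it is a unit and integrality rests solely on the $\Pi$-factor, while if carries occur the multinomial vanishes in $A$ and the term drops out. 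In the no-carry case I would verify, using the explicit formula for $\ord_v(\Pi_n)$ coming from the base-$q$ digits of $n$ together with $D_i = \prod_{a \in A_{i+}} a$, that $\ord_v(\Pi_k) \geqslant \ord_v(\Pi_m) + \sum_\ell \mu_\ell\, \ord_v(\Pi_{j_\ell})$. Showing that these two digit conditions always align is the one genuinely technical point; everything else is formal bookkeeping with power series.
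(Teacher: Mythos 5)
Your proposal is essentially correct, and it is worth pointing out that the paper itself offers no proof of this proposition at all --- it only remarks that the argument ``can be easily adapted'' from Goss~\cite[\S 3.2]{Goss78}, \cite[Prop.~9.1.5]{Goss} --- so you are supplying precisely the adaptation being alluded to. Your reductions are the natural ones: product closure via integrality of the Carlitz binomial coefficients $\Pi_k/(\Pi_i\Pi_{k-i})$, a formal geometric series for~(a), and the Fa\`a di Bruno grouping together with Kummer's theorem for~(b). Your diagnosis of the one real difficulty is also right: an individual composition $i_1+\cdots+i_m=k$ can give a non-integral ratio $\Pi_k/(\Pi_m\Pi_{i_1}\cdots\Pi_{i_m})$, and integrality only emerges after equal exponents are collected with their multinomial coefficient.

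The single assertion you leave unverified --- that no base-$p$ carries in $\mu_1+\cdots+\mu_r=m$ forces $\ord_v(\Pi_k)\geqslant\ord_v(\Pi_m)+\sum_\ell\mu_\ell\,\ord_v(\Pi_{j_\ell})$ --- is true, and closes as follows. Fix $v$, let $d=\deg\wp$ and $Q=q^d$. From $D_i=\prod_{a\in A_{i+}}a$ one gets $\ord_v(D_i)=\sum_{t\geqslant1,\ td\leqslant i}q^{i-td}$, whence $\ord_v(\Pi_n)=\sum_{t\geqslant1}\lfloor n/Q^t\rfloor=(n-s_Q(n))/(Q-1)$, where $s_Q$ denotes the base-$Q$ digit sum. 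Since $k=\sum_\ell\mu_\ell j_\ell$, the desired inequality is equivalent to
\[
  s_Q(m)+\sum_\ell\mu_\ell\,s_Q(j_\ell)\ \geqslant\ m+s_Q(k).
\]
The no-carry condition base $p$ implies no carries base $Q$ (each base-$Q$ digit is a block of base-$p$ digits, and blockwise the digits of the $\mu_\ell$ still add to the corresponding digit of $m$ without overflow), so $s_Q(m)=\sum_\ell s_Q(\mu_\ell)$. Combining this with sub-additivity and sub-multiplicativity of digit sums, $s_Q(k)\leqslant\sum_\ell s_Q(\mu_\ell j_\ell)\leqslant\sum_\ell s_Q(\mu_\ell)s_Q(j_\ell)$, the left side minus the right side is bounded below by $\sum_\ell\bigl(s_Q(j_\ell)-1\bigr)\bigl(\mu_\ell-s_Q(\mu_\ell)\bigr)\geqslant0$. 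One caution: you must sum over all $t$ before concluding, since the termwise inequality $\lfloor k/Q^t\rfloor\geqslant\lfloor m/Q^t\rfloor+\sum_\ell\mu_\ell\lfloor j_\ell/Q^t\rfloor$ can fail for an individual $t$ (e.g.\ $q=2$, $r=1$, $\mu_1=j_1=2$, $t=1$); the digit-sum identity handles this automatically. With that computation inserted, your proof is complete.
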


We apply this proposition to the generating function of Goss polynomials.

\begin{lem} \label{L:PiGoss}
For each $k \geqslant 1$, we have $\Pi_{k-1} G_{k}(u) \in A[u]$.
\end{lem}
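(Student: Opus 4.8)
The plan is to read the statement through the generating series for Goss polynomials in Theorem~\ref{T:Gosspolys}(b) and recast it as an integrality statement about $A$-Hurwitz series, so that Proposition~\ref{P:Hurwitz} does the work. Specializing to $\Lambda = \Lambda_C$, so that $e_{\Lambda_C} = e_C$ and $G_k = G_{k,\Lambda_C}$ as in~\eqref{E:gk}, the generating series becomes
\[
  \frac{\cG_{\Lambda_C}(u,x)}{x} = \frac{u}{1 - u\, e_C(x)} = \sum_{m=0}^\infty G_{m+1}(u)\, x^m .
\]
After the substitution $k = m+1$, the desired conclusion $\Pi_{k-1} G_k(u) \in A[u]$ is exactly the assertion that the right-hand side is an $A$-Hurwitz series over $R = A[u]$. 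Thus I would first reduce the lemma to showing that $u/(1 - u\,e_C(x))$ is an $A$-Hurwitz series over $A[u]$.

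The key input is that the Carlitz exponential is itself an $A$-Hurwitz series over $A$. From~\eqref{E:elogdef} we have $e_C(x) = \sum_{i \geqslant 0} x^{q^i}/D_i$, and the crucial identity is $\Pi_{q^i} = D_i$, which is immediate from~\eqref{E:Pi} since $q^i$ has a single nonzero base-$q$ digit. Hence $e_C(x) = \sum_m b_m x^m/\Pi_m$ with $b_{q^i} = 1$ and $b_m = 0$ otherwise; in particular its constant term is $0$. It follows that $1 - u\,e_C(x)$ is an $A$-Hurwitz series over $A[u]$ with constant term $1$, its nonconstant coefficients being $-u/\Pi_{q^i}$ with $-u \in A[u]$.

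From here I would invoke Proposition~\ref{P:Hurwitz}(a) to conclude that $1/(1 - u\,e_C(x))$ is an $A$-Hurwitz series over $A[u]$, and then multiply by $u \in A[u]$, which preserves the $A$-Hurwitz property, to obtain that $u/(1 - u\,e_C(x))$ is $A$-Hurwitz over $A[u]$. Reading off the coefficient of $x^m$ yields $\Pi_m\, G_{m+1}(u) \in A[u]$, which is the claim. Alternatively one may use Proposition~\ref{P:Hurwitz}(b): since $1/(1-y) = \sum_m \Pi_m\, y^m/\Pi_m$ is $A$-Hurwitz with constant term $1$, composing with the constant-term-$0$ series $g(x) = u\,e_C(x)$ gives the same conclusion.

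The step requiring the most care is not any single computation but the bookkeeping of indices: the coefficient of $x^k$ in $\cG_{\Lambda_C}(u,x)$ is $G_k(u)$, whereas the factorial that clears its denominator is $\Pi_{k-1}$. This mismatch is exactly why one must divide the generating series by $x$ and work with $u/(1 - u\,e_C(x))$ rather than $\cG_{\Lambda_C}(u,x)$ itself, and getting this alignment right is the only genuinely delicate point in an otherwise formal argument.
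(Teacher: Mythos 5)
Your proof is correct and follows essentially the same route as the paper: both rewrite $\cG_{\Lambda_C}(u,x)/x = u/(1-u\,e_C(x))$, observe via $\Pi_{q^i}=D_i$ that $1-u\,e_C(x)$ is an $A$-Hurwitz series over $A[u]$ with constant term $1$, and apply Proposition~\ref{P:Hurwitz}(a) before multiplying by the constant $u$ and reading off coefficients. Your explicit attention to the index shift (dividing by $x$ so that $G_k$ pairs with $\Pi_{k-1}$) matches what the paper does implicitly.
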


\begin{proof}
Consider the generating series
\[
  \frac{\cG(u,x)}{x} = \sum_{k=1}^\infty G_{k}(u) x^{k-1} = \frac{u}{1- ue_C(x)}.
\]
We claim that $\cG(u,x)/x$ is an $A$-Hurwitz series over $A[u]$.  Indeed certainly the constant series $u$ itself is one, and
\[
  1 - ue_C(x) = 1 - \sum_{i=0}^\infty \frac{u x^{q^i}}{\Pi_{q^i}}
\]
is an $A$-Hurwitz series over $A[u]$ with constant term~$1$, so the claim follows from Proposition~\ref{P:Hurwitz}(a).  The result is then immediate.
\end{proof}

\begin{thm} \label{T:vintegral}
For $r \geqslant 0$, if $f \in \cM_s^m(A_v)$, then $\Pi_{r} \Theta^r(f) \in \cM_{s+2r}^{m+r}(A_v)$.  Thus we have a well-defined operator,
\[
  \Pi_{r}\Theta^r : \cM_s^m(A_v) \to \cM_{s+2r}^{m+r}(A_v).
\]
\end{thm}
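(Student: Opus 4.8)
The plan is to separate the two assertions implicit in the statement. The membership $\Theta^r(f) \in \cM_{s+2r}^{m+r}$ is already supplied by Theorem~\ref{T:Thetavadic}, and since $\cM_{s+2r}^{m+r}$ is a $K_v$-vector space, multiplying by the scalar $\Pi_r \in A \subseteq K_v$ keeps $\Pi_r\Theta^r(f)$ inside it. Hence everything reduces to the purely $v$-integral claim: if $f \in \power{A_v}{u}$, then $\Pi_r\Theta^r(f) \in \power{A_v}{u}$. Once this is in hand, the intersection $\cM_{s+2r}^{m+r} \cap \power{A_v}{u} = \cM_{s+2r}^{m+r}(A_v)$ delivers the result by definition.

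To establish integrality I would start from the defining expansion \eqref{E:Thetafbeta},
\[
  \Theta^r(f) = \sum_{j=0}^r \beta_{r,j}\, u^{j+1}\, \pd_u^{j}\bigl( u^{j-1} f \bigr),
\]
and multiply through by $\Pi_r$ to obtain $\Pi_r\Theta^r(f) = \sum_{j=0}^r (\Pi_r\beta_{r,j})\, u^{j+1}\,\pd_u^{j}(u^{j-1}f)$. The decisive point is that the \emph{single} factor $\Pi_r$ clears the denominators of \emph{all} the $\beta_{r,j}$ at once: because $G_{r+1}(u) = \sum_{j=0}^r \beta_{r,j}u^{j+1}$ by~\eqref{E:gk}, Lemma~\ref{L:PiGoss} applied with $k = r+1$ yields $\Pi_r G_{r+1}(u) \in A[u]$, so that $\Pi_r\beta_{r,j} \in A \subseteq A_v$ for every $j$.

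It then remains to see that each $u^{j+1}\pd_u^{j}(u^{j-1}f)$ is itself $v$-integral. For $j \geqslant 1$ the series $u^{j-1}f$ lies in $\power{A_v}{u}$ since $f$ does, and $\pd_u^{j}$ preserves $\power{A_v}{u}$ because $\pd_u^{j}(u^n) = \binom{n}{j}u^{n-j}$ with $\binom{n}{j} \in \ZZ \hookrightarrow \FF_p \subseteq A_v$ (the terms with $n < j$ vanish, so no pole appears); multiplying by $u^{j+1}$ then stays in $\power{A_v}{u}$. The term $j=0$ simplifies to $\beta_{r,0}f$, which vanishes for $r \geqslant 1$ as $G_{r+1}$ is divisible by $t^2$, and equals $f$ in the trivial case $r=0$ where $\Pi_0 = 1$. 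Collecting these observations shows $\Pi_r\Theta^r(f) \in \power{A_v}{u}$, completing the reduction.

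I do not anticipate a genuine difficulty; the argument is essentially bookkeeping once Corollary~\ref{C:Thetaf} and Lemma~\ref{L:PiGoss} are available. The one conceptual point worth emphasizing—and the reason the clean normalization $\Pi_r\Theta^r$ works—is that Lemma~\ref{L:PiGoss} produces exactly the factorial needed to make every coefficient of $G_{r+1}$ integral simultaneously, so that no larger or composite correction factor is required; the secondary ingredient, that hyperderivatives $\pd_u^{j}$ preserve $\power{A_v}{u}$, rests only on binomial coefficients being rational integers.
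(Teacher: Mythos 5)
Your argument is correct and follows essentially the same route as the paper: the paper's proof is the one-line observation that by Corollary~\ref{C:Thetaf} the only denominators in $\Theta^r(f)$ come from the coefficients of $G_{r+1}(u)$, and these are cleared by $\Pi_r$ via Lemma~\ref{L:PiGoss}. You have simply spelled out the bookkeeping (integrality of the binomial coefficients in $\pd_u^j$, the vanishing of the $j=0$ term for $r\geqslant 1$, and the reduction to Theorem~\ref{T:Thetavadic} for membership in $\cM_{s+2r}^{m+r}$), using the equivalent expansion \eqref{E:Thetafbeta} in place of \eqref{E:Thetaf}.
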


\begin{proof}
By~\eqref{E:Thetaf}, we see that the possible denominators of $\Theta^r(f)$ come from the denominators of $G_{r+1}(u)$, which are cleared by $\Pi_{r}$ using Lemma~\ref{L:PiGoss}.
\end{proof}

\begin{rem}
Once we see that $\Pi_{r}\Theta^r$ preserves $v$-integrality, the question of whether $\Pi_{r}$ is the best possible denominator is important but subtle, and in general the answer is no.  For example, taking $r = q^{d+1}-1$, we see from Theorem~\ref{T:Gosspolys}(d), that
\[
  G_{q^{d+1}}(u) = u^{q^{d+1}},
\]
and so $\Theta^{q^{d+1}-1} : \power{A_v}{u} \to \power{A_v}{u}$ already by \eqref{E:Thetaf}.  However, $\Pi_{q^{d+1}-1}$ can be seen to be divisible by $\wp$.

Nevertheless, we do see that $\Pi_{r}$ is the best possible denominator in many cases.  For example,
let $r = q^i$ for $i \geqslant 1$.  Then from Theorem~\ref{T:Gosspolys}(a),
\[
  G_{q^i+1}(u) = u\biggl( G_{q^i}(u) + \frac{G_{q^i+1-q}(u)}{D_1} + \cdots +
  \frac{G_{1}(u)}{D_i} \biggr) = u\biggl( u^{q^i} + \frac{G_{q^i+1-q}(u)}{D_1} + \cdots +
  \frac{u}{D_i} \biggr).
\]
{From} Theorem~\ref{T:Gosspolys}(b) we know that $u^2$ divides $G_k(u)$ for all $k \geqslant 2$, and so we find that the coefficient of $u^2$ in $G_{q^i+1}(u)$ is precisely $1/D_i$, which is the same as $\Pi_{q^i}$.
\end{rem}


\begin{thebibliography}{99}

\bibitem{BosserPellarin08} %
V. Bosser and F. Pellarin, \textit{Hyperdifferential properties of Drinfeld quasi-modular forms}, Int. Math. Res. Not. IMRN \textbf{2008} (2008), Art. ID rnn032, 56 pp.

\bibitem{BosserPellarin09} %
V. Bosser and F. Pellarin, \textit{On certain families of Drinfeld quasi-modular forms}, J. Number Theory \textbf{129} (2009), no.~12, 2952--2990.

\bibitem{Brownawell99} %
W. D. Brownawell, \textit{Linear independence and divided derivatives of a Drinfeld module. I.} in: Number Theory in Progress, Vol.~1 (Zakopane-Ko\'{s}cielisko, 1997), de Gruyter, Berlin, 1999, pp.~47--61.

\bibitem{Carlitz37} %
L. Carlitz, \textit{An analogue of the von Staudt-Clausen theorem}, Duke Math. J. \textbf{3} (1937), no.~3, 503--517.

\bibitem{Conrad00} %
K. Conrad, \textit{The digit principle}, J. Number Theory \textbf{84} (2000), no.~2, 230--257.

\bibitem{FresnelvdPut} %
J. Fresnel and M. van der Put, \textit{Rigid Analytic Geometry and its Applications}, Birkh\"{a}user, Boston, 2004.

\bibitem{Gekeler88} %
E.-U. Gekeler, \textit{On the coefficients of Drinfeld modular forms}, Invent. Math. \textbf{93} (1988), no.~3, 667--700.

\bibitem{Gekeler13} %
E.-U. Gekeler, \textit{On the zeroes of Goss polynomials}, Trans. Amer. Math. Soc. \textbf{365} (2013), no.~3, 1669--1685.

\bibitem{Goss78} %
D. Goss, \textit{von Staudt for $\mathbf{F}_q[T]$}, Duke Math. J. \textbf{45} (1978), no.~4, 885--910.

\bibitem{Goss80a} %
D. Goss, \textit{Modular forms for $\mathbf{F}_r[T]$}, J. reine Angew. Math. \textbf{317} (1980), 16--39.

\bibitem{Goss80b} %
D. Goss, \textit{$\pi$-adic Eisenstein series for function fields}, Compos. Math. \textbf{41} (1980), no.~1, 3--38.

\bibitem{Goss80c} %
D. Goss, \textit{The algebraist's upper half-plane}, Bull. Amer. Math. Soc. (N.S.) \textbf{2} (1980), no.~3, 391--415.

\bibitem{Goss} %
D. Goss, \textit{Basic Structures of Function Field Arithmetic}, Springer-Verlag, Berlin, 1996.

\bibitem{Goss14} %
D. Goss, \textit{A construction of $\mathfrak{v}$-adic modular forms}, J. Number Theory \textbf{136} (2014), 330--338.

\bibitem{Jeong11} %
S. Jeong, \textit{Calculus in positive characteristic $p$}, J. Number Theory \textbf{131} (2011), no.~6, 1089--1104.

\bibitem{Lopez10} %
B. L\'{o}pez, \textit{A non-standard Fourier expansion for the Drinfeld discriminant function}, Arch. Math. (Basel) \textbf{95} (2010), no.~2, 143--150.

\bibitem{Lopez11} %
B. L\'{o}pez, \textit{Action of Hecke operators on two distinguished Drinfeld modular forms}, Arch. Math. (Basel) \textbf{97} (2011), no.~5, 423--429.

\bibitem{PLogAlg}
M. A. Papanikolas, \textit{Log-algebraicity on tensor powers of the Carlitz module and special values of Goss $L$-functions}, in preparation.

\bibitem{Petrov13} %
A. Petrov, \textit{$A$-expansions of Drinfeld modular forms}, J. Number Theory \textbf{133} (2013), no.~7, 2247--2266.

\bibitem{Petrov15} %
A. Petrov, \textit{On hyperderivatives of single-cuspidal Drinfeld modular forms with $\mathcal{A}$-expansions}, J. Number Theory \textbf{149} (2015), 153--165.

\bibitem{Serre73} %
J.-P. Serre, \textit{Formes modulaires et fonctions z\^{e}ta $p$-adiques}, in: Modular functions of one variable, III (Proc. Internat. Summer School, Univ. Antwerp, 1972), Springer, Berlin, 1973,  pp.~191--268.

\bibitem{UchinoSatoh98} %
Y. Uchino and T. Satoh, \textit{Function field modular forms and higher-derivations}, Math. Ann. \textbf{311} (1998), no.~3, 439--466.

\bibitem{Vincent10} %
C. Vincent, \textit{Drinfeld modular forms modulo $\mathfrak{p}$}, Proc. Amer. Math. Soc. \textbf{138} (2010), no.~12, 4217--4229.

\bibitem{Vincent14} %
C. Vincent, \textit{On the trace and norm maps from $\Gamma_0(\mathfrak{p})$ to $\mathrm{GL}_2(A)$}, J. Number Theory \textbf{142} (2014), 18--43.

\bibitem{Vincent15} %
C. Vincent, \textit{Weierstrass points on the Drinfeld modular curve $X_0(\mathfrak{p})$}, Res. Math. Sci. \textbf{2} (2015), 2:10, 40 pp.

\end{thebibliography}
\end{document}